\newtheorem{theorem}{Theorem}[section]
\newtheorem{lemma}[theorem]{Lemma}
\newtheorem{corollary}[theorem]{Corollary}
\newtheorem{proposition}[theorem]{Proposition}
\newtheorem{question}[theorem]{Question}
\newtheorem{observation}[theorem]{Observation}
\theoremstyle{definition}
\newtheorem{definition}[theorem]{Definition}
\def\ZZ{\mathbb{Z}}
\title{Vertex-transitive CIS graphs\thanks{Authors' e-mail addresses: \texttt{dobson@math.msstate.edu} (Edward Dobson),
\texttt{ademir.hujdurovic@upr.si} (Ademir Hujdurovi\'c), \texttt{martin.milanic@upr.si} (Martin Milani\v c),
\texttt{gabriel.verret@uwa.edu.au} (Gabriel Verret).}}
\begin{document}

\author[1,2]{Edward Dobson}
\author[2,3]{Ademir Hujdurovi\'{c}}
\author[2,3,4]{Martin Milani\v c}
\author[2,3,5]{Gabriel Verret}

\affil[1]{\scriptsize{}Department of Mathematics and Statistics}
\affil[ ]{Mississippi State University, Mississippi State, MS 39762, USA}
\affil[ ]{}
\affil[2]{University of Primorska, UP IAM}
\affil[ ]{Muzejski trg 2, SI-6000 Koper, Slovenia}
\affil[ ]{}
\affil[3]{University of Primorska, UP FAMNIT} 
\affil[ ]{Glagolja\v{s}ka 8, SI-6000 Koper, Slovenia}
\affil[ ]{}
\affil[4]{IMFM, Jadranska 19, 1000 Ljubljana, Slovenia}
\affil[ ]{}
\affil[5]{School of Mathematics and Statistics}
\affil[ ]{University of Western Australia, 35 Stirling Highway, Crawley, WA 6009, Australia}

\maketitle

\pagestyle{plain}

\begin{abstract}
A {\it CIS} graph is a graph in which every maximal stable set and every maximal clique intersect. A graph is {\em well-covered} if all its maximal stable sets are of the same size, {\em co-well-covered} if its complement is well-covered, and {\it vertex-transitive} if, for every pair of vertices, there exists an automorphism of the graph mapping one to the other.  We show that a vertex-transitive graph is CIS if and only if it is well-covered, co-well-covered, and the product of its clique and stability numbers equals its order. A graph is {\it irreducible} if no two distinct vertices have the same neighborhood. We classify irreducible well-covered CIS graphs with clique number at most $3$ and vertex-transitive CIS graphs of valency at most $7$, which include an infinite family. We also exhibit an infinite family of vertex-transitive CIS graphs which are not Cayley.

\medskip

\noindent {\bf Keywords:} CIS graph; well-covered graph; vertex-transitive graph; Cayley graph;
maximal stable set; maximum stable set; maximal clique; maximum clique.

\medskip

\noindent \small {\bf Math.~Subj.~Class.}~(2010): 05C69, 05C25
\end{abstract}

\section{Introduction}\label{sec:intro}

A {\it CIS} graph is a graph in which every maximal stable set and every maximal clique intersect (CIS stands for ``Cliques Intersect Stable sets''). The study of CIS graphs is rooted in observations of Berge~\cite{Berge} and Grillet~\cite{MR0244112}  (see \cite{MR1344757}). CIS graphs were studied in a series of papers~\cite{AndBorGur,BorosGM2014,MR2657704,MR2489416,MR2496915,MR2064873,MR1344757,MR2145519,BorosGM2014+}. The problem of recognizing CIS graphs is believed to be co-NP-complete~\cite{MR1344757}, conjectured to be co-NP-complete~\cite{MR2234986}, and conjectured to be polynomial~\cite{AndBorGur}. For further background on CIS graphs, see, e.g.,~\cite{BorosGM2014}.

A graph is called {\it vertex-transitive} if, for every pair of vertices, there exists an automorphism of the graph mapping one to the other. Our goal in this paper is to study vertex-transitive CIS graphs.

Our first main result is that a vertex-transitive graph is CIS if and only if it is well-covered, co-well-covered, and the product of its clique and stability numbers equals its order (Theorem~\ref{thm:VT-CIS}). (A graph is {\em well-covered} if all its maximal stable sets are of the same size and {\em co-well-covered} if its complement is well-covered.) We then exhibit several infinite families of vertex-transitive CIS graphs (see Section~\ref{sec:examples}), including some non-Cayley ones (see Proposition~\ref{prop:Gamma_r}).

In view of Theorem~\ref{thm:VT-CIS}, we spend some time studying well-covered graphs. (Well-covered graphs were defined by Plummer~\cite{MR0289347} and are well studied in the literature; see, for example, the survey~\cite{MR1254158} and~\cite{MR2739910, MR2602814,MR2568844,MR2537505, MR2340625, MR2206373} for some more recent references.) In particular,  we give a full classification of  irreducible well-covered CIS graphs with clique number at most $3$ (Theorem~\ref{Omega=3}).

These results are then used to prove our main result: a classification of connected vertex-transitive CIS graphs of valency at most $7$ (Corollary~\ref{cor:VT-CIS-valency-7}). In particular, we show that there are only finitely many such graphs for valency at most $6$ but there exists an interesting infinite family of examples with valency $7$. In fact, we prove Corollary~\ref{cor:VT-CIS-valency-7} under an hypothesis slightly weaker than vertex-transitive (see Theorem~\ref{thm:valency-7}). We conclude the paper with a few open problems (Section~\ref{sec:questions}).

\section{Preliminaries}\label{sec:prelim}

All graphs considered are finite, simple and undirected. Let $\Gamma=(V,E)$ be a graph. We call $V$ the vertex set of $\Gamma$ and write $V= V(\Gamma)$. Similarly,  we call $E$ the edge set of $\Gamma$ and write $E = E(\Gamma)$. The {\it complement} ${\overline{\Gamma}}$  of $\Gamma$ is the graph with the same vertex set and the complementary edge set $\overline E = \{\{x,y\}\,\mid \,x,y\in V,~x\neq y \textrm{ and } \{x,y\}\not\in E\}$.

For a vertex $v\in V$, let $N(v)$ denote the {\it neighborhood} of $v$, that is, the set of vertices of $\Gamma$ that are adjacent to $v$. The {\it closed neighborhood} of $v$ is the set $N(v)\cup\{v\}$, denoted by $N[v]$. The {\it valency} of $v$ is $|N(v)|$ and $\Gamma$ is said to be {\it $k$-regular} (or we say that it has \emph{valency} $k$) if all its vertices have valency $k$. A {\it universal} vertex of $\Gamma$ is a vertex adjacent to all other vertices of $V(\Gamma)$. If $v$ is a non-isolated vertex of $\Gamma$ then the \emph{local graph of $\Gamma$ at $v$} is the subgraph of $\Gamma$ induced by $N(v)$.

A {\it clique} (respectively, a {\it stable set}) of $\Gamma$ is {a set} of pairwise adjacent (respectively, non-adjacent) vertices. The inclusion maximal cliques and stable sets of $\Gamma$  are called {\it maximal}. The maximal cardinality of a clique (respectively a stable set) of $\Gamma$ is called the \emph{clique} (respectively \emph{stability}) number and denoted $\omega(\Gamma)$ (respectively $\alpha(\Gamma)$). We say that $\Gamma$ is {\it triangle-free} if $\omega(\Gamma)\leq 2$.

A {\it matching} in $\Gamma$ is a set of pairwise disjoint edges. A matching is \emph{perfect} if every vertex is incident with some edge of the matching. The {\it line graph} of $\Gamma$, denoted by $L(\Gamma)$, is the graph with vertex set $E(\Gamma)$ and two edges of $\Gamma$ adjacent in $L(\Gamma)$ if and only if they intersect. For positive integers $m$ and $n$, we denote by $n\Gamma$ the disjoint union of $n$ copies of $\Gamma$, by $C_n$ the cycle of order $n$, by $K_n$ the complete graph of order $n$ and by $K_{m,n}$ the complete bipartite graph with parts of size $m$ and $n$. The {\it lexicographic product} of graphs $\Gamma_1$ and $\Gamma_2$ is the graph $\Gamma_1[\Gamma_2]$ with vertex set $V(\Gamma_1)\times V(\Gamma_2)$, where two vertices $(u,x)$ and $(v,y)$ are adjacent if and only if either $\{u,v\}\in E(\Gamma_1)$ or $u = v$ and $\{x,y\}\in E(\Gamma_2)$.

Let $G$ be a group and let $S$ be an inverse-closed subset of $G$ such that $1\not\in S$.  The {\it Cayley graph of $G$ with connection set $S$} has vertex set $G$ with two vertices $g$ and $h$ adjacent if and only if $g^{-1}h\in S$. We say that a graph is \emph{Cayley} if it isomorphic to some Cayley graph. It is well known that a graph is Cayley if and only if its automorphism group contains a subgroup acting regularly (that is, sharply transitively) on vertices (see, e.g.,~\cite{Sabidussi1958}). In particular, Cayley graphs are vertex-transitive.

\subsection{CIS graphs}\label{sec:prelim-CIS-well-covered}

We first recall some basic properties of CIS graphs (see, e.g.,~\cite{BorosGM2014}).

\begin{proposition}\label{lemma:CIS-complements}\mbox{}
\begin{enumerate}
\item A graph is CIS if and only if its complement is CIS.
\item A disconnected graph is CIS if and only if each of its connected component is CIS. \label{CIS:disconnected}
\item For every two graphs $\Gamma_1$ and $\Gamma_2$, the lexicographic product $\Gamma_1[\Gamma_2]$ is CIS if and only if $\Gamma_1$ and $\Gamma_2$ are CIS. \label{CIS:lexproduct}
\end{enumerate}
\end{proposition}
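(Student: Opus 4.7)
The plan is to treat the three parts separately, each following from a structural description of maximal cliques and stable sets in the relevant construction.

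For part (1), the observation is that complementation exchanges edges and non-edges, hence exchanges cliques with stable sets and, crucially, \emph{maximal} cliques with \emph{maximal} stable sets. The condition ``every maximal clique meets every maximal stable set'' is therefore symmetric under $\Gamma \mapsto \overline{\Gamma}$, which immediately gives the equivalence.

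For part (2), I would use two easy structural facts about a disconnected graph $\Gamma$ with components $\Gamma_1,\dots,\Gamma_k$: (i) every clique is connected, so every maximal clique of $\Gamma$ is a maximal clique of some $\Gamma_i$; (ii) since no edge joins distinct components, maximal stable sets of $\Gamma$ are precisely disjoint unions $S_1\cup\cdots\cup S_k$ where each $S_i$ is a maximal stable set of $\Gamma_i$. A maximal clique $C$ living in $\Gamma_i$ intersects a maximal stable set $S_1\cup\cdots\cup S_k$ of $\Gamma$ if and only if $C\cap S_i\neq\emptyset$, so the CIS property of $\Gamma$ is equivalent to the CIS property of each $\Gamma_i$.

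For part (3), the key step is to describe maximal cliques and maximal stable sets in $\Gamma_1[\Gamma_2]$. Writing $\pi\colon V(\Gamma_1)\times V(\Gamma_2)\to V(\Gamma_1)$ for the projection onto the first coordinate, an easy check from the definition of the lexicographic product shows that a set $C\subseteq V(\Gamma_1[\Gamma_2])$ is a maximal clique if and only if $K:=\pi(C)$ is a maximal clique of $\Gamma_1$ and, for each $u\in K$, the fiber $L_u:=\{x:(u,x)\in C\}$ is a maximal clique of $\Gamma_2$; analogously a set $S$ is a maximal stable set if and only if $I:=\pi(S)$ is a maximal stable set of $\Gamma_1$ and each fiber $S_u$ is a maximal stable set of $\Gamma_2$. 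With this description the intersection condition reads: $C\cap S\neq\emptyset$ iff there exists $u\in K\cap I$ with $L_u\cap S_u\neq\emptyset$. The forward implication (if both $\Gamma_i$ are CIS, so is the product) is then immediate. For the converse, to show $\Gamma_1$ is CIS, lift any hypothetical disjoint pair $K,I$ in $\Gamma_1$ to a maximal clique and maximal stable set in the product by making arbitrary choices of maximal clique/stable set in each fiber. To show $\Gamma_2$ is CIS, pick any $u\in V(\Gamma_1)$, extend $\{u\}$ to a maximal clique $K$ and a maximal stable set $I$ of $\Gamma_1$, and observe that $K\cap I=\{u\}$ (any other common vertex would simultaneously be and not be adjacent to $u$); then, given a hypothetical disjoint maximal clique $L$ and maximal stable set $T$ of $\Gamma_2$, place $L$ in the fiber over $u$ on the clique side and $T$ in the fiber over $u$ on the stable-set side.

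The only mildly delicate point is the last construction — ensuring that the unique vertex of $K\cap I$ is the one where we install the non-CIS witness of $\Gamma_2$ — but this is handled by the observation that a maximal clique and a maximal stable set containing a common vertex $u$ meet in exactly $\{u\}$.
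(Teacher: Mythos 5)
Your proof is correct. The paper itself states this proposition without proof, simply citing the literature (these are recalled as "basic properties of CIS graphs"), and your arguments — complementation swapping maximal cliques with maximal stable sets, the component-wise description of maximal cliques and stable sets in a disconnected graph, and the fiber-wise description of maximal cliques and stable sets in the lexicographic product (including the observation that a maximal clique and a maximal stable set through a common vertex $u$ meet exactly in $\{u\}$, which is needed to embed a non-CIS witness of $\Gamma_2$ into a single fiber) — are precisely the standard ones the paper implicitly relies on.
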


In view of Proposition~\ref{lemma:CIS-complements}~(\ref{CIS:disconnected}), we often restrict ourselves to the study of connected graphs.

\begin{definition}
Let $\Gamma$ be a graph and let $R$ be the equivalence relation ``having the same neighborhood'' on $V(\Gamma)$. If $R$ is the identity relation then we say that $\Gamma$ is \emph{irreducible}. It is not hard to see that the quotient graph of $\Gamma$ with respect to $R$ is irreducible. It is called the \emph{irreducible quotient} of $\Gamma$.
\end{definition}

It is easy to see that every equivalence class of $R$ is a stable set\footnote{The equivalence classes of $R$ have appeared in the literature under various names such as {\it maximal independent-set modules}~\cite{HeggernesMP2011}, or {\it similarity classes}~\cite{LozinM2010}.}, and thus, given an irreducible graph $X$ one can easily recover all graphs which have $X$ as an irreducible quotient.

\begin{proposition}\label{irreducibleQuotient}
A graph is CIS if and only if its irreducible quotient is CIS.
\end{proposition}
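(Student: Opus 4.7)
The plan is to explicitly describe the maximal cliques and maximal stable sets of $\Gamma$ in terms of those of its irreducible quotient $\Gamma'$, and then transfer the CIS property in both directions using these descriptions. First I would set up the structure: let $V_1,\dots,V_k$ be the equivalence classes of $R$, which are stable sets (as noted in the excerpt), corresponding to the vertices of $\Gamma'$. For $i\neq j$, the bipartite subgraph of $\Gamma$ between $V_i$ and $V_j$ is either complete or empty, reflecting adjacency in $\Gamma'$; this is immediate from the fact that all vertices in $V_i$ share a common neighborhood.

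Next I would establish two correspondences. For stable sets: any maximal stable set $S$ of $\Gamma$ is a union of classes, because if $u\in S\cap V_i$ and $u'\in V_i\setminus\{u\}$, then $u'$ has the same neighbors as $u$ and is non-adjacent to $u$, hence non-adjacent to all of $S$, forcing $u'\in S$ by maximality; thus $S\mapsto\{V_i:V_i\cap S\neq\emptyset\}$ gives a bijection between maximal stable sets of $\Gamma$ and those of $\Gamma'$. For cliques: any clique of $\Gamma$ meets each $V_i$ in at most one vertex (since classes are stable), so a maximal clique $C$ of $\Gamma$ projects to a set $\pi(C)=\{V_i:V_i\cap C\neq\emptyset\}$ which is a clique of $\Gamma'$, and is maximal in $\Gamma'$ (any missing class $V_j$ adjacent to all of $\pi(C)$ would allow any of its members to extend $C$). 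Conversely, any maximal clique $C'$ of $\Gamma'$ lifts to a maximal clique of $\Gamma$ by choosing one vertex from each $V_i\in C'$: an outside vertex lying in some $V_i\in C'$ is non-adjacent to the chosen representative, while an outside vertex in some $V_j\notin C'$ fails to be adjacent to some chosen representative by maximality of $C'$.

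With these correspondences in hand, both implications become routine. For the forward direction, given maximal $C'$ and $S'$ in $\Gamma'$, lift them to a maximal clique $C$ and maximal stable set $S$ of $\Gamma$ as above; CIS-ness of $\Gamma$ yields $x\in C\cap S$, and the class $V_i$ of $x$ lies in $C'\cap S'$. For the converse, given maximal $C$ and $S$ in $\Gamma$, project to maximal $\pi(C)$ and $\pi(S)$ in $\Gamma'$; CIS-ness of $\Gamma'$ gives a common class $V_i\in\pi(C)\cap\pi(S)$, and then the unique vertex of $C\cap V_i$ also lies in $S$ since $V_i\subseteq S$.

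The main obstacle is purely verificational: one has to be careful that the lift of a maximal clique of $\Gamma'$ is genuinely maximal in $\Gamma$, and that the projection of a maximal clique of $\Gamma$ is genuinely maximal in $\Gamma'$, rather than merely a clique. Both checks rest on the single structural fact that adjacency in $\Gamma$ between distinct classes is determined entirely by $\Gamma'$, so that witnesses of non-extendability transfer between the two graphs without loss.
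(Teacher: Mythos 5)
Your proposal is correct and follows essentially the same route as the paper: the paper's proof rests on exactly the two correspondences you establish, namely that maximal cliques of $\Gamma$ are obtained by choosing one representative from each fiber of a maximal clique of the quotient, and maximal stable sets of $\Gamma$ are unions of fibers of maximal stable sets of the quotient, from which both implications follow. The only difference is that the paper leaves these correspondences as ``not hard to see,'' whereas you verify them (and the transfer of maximality) in detail.
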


\begin{proof}
Let $\Gamma$ be a graph, let $X$ be its irreducible quotient and let $\pi$ be the natural projection from $\Gamma$ to $X$. It is not hard to see that a maximal clique of $\Gamma$ consists in choosing exactly one representative of each $\pi$-fiber of a maximal clique of $X$. On the other hand, a maximal stable set of $\Gamma$ consists in choosing all elements of each fiber of a maximal stable set of $X$. The result easily follows.
\end{proof}

By Proposition~\ref{irreducibleQuotient} and the comment preceding it, the study of CIS graphs is reduced to the study of irreducible CIS graphs. We now develop some properties of irreducible CIS graphs.

\begin{lemma}\label{coollemma}
Let $\Gamma$ be an irreducible CIS graph. If $\omega(\Gamma)=t$ then no two $t$-cliques of $\Gamma$ intersect in a $(t-1)$-clique.
\end{lemma}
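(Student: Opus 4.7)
The plan is to assume for contradiction that two distinct $t$-cliques $C_1, C_2$ share a $(t-1)$-clique $K$, write $C_1 = K \cup \{u\}$ and $C_2 = K \cup \{v\}$ with $u \neq v$, and then derive a contradiction from irreducibility by exhibiting a maximal stable set that violates the CIS property at $C_1$ or $C_2$. A preliminary observation I would use throughout: since $\omega(\Gamma) = t$, the vertices $u$ and $v$ are non-adjacent (otherwise $K \cup \{u,v\}$ would be a $(t+1)$-clique), and every vertex of $K$ is adjacent to both $u$ and $v$.

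The main structural step is to analyze how an arbitrary maximal stable set $S$ meets $C_1 \cup C_2$. Since $\Gamma$ is CIS, $|S \cap C_1| = |S \cap C_2| = 1$ (a stable set meets a clique in at most one vertex, and the CIS hypothesis forces at least one). A simple case split on whether $S$ meets $K$ then yields the dichotomy: either $S$ contains a unique vertex $k \in K$ (and neither $u$ nor $v$), or $S \cap K = \emptyset$ and $S$ contains both $u$ and $v$. This dichotomy is the technical heart of the argument and extracts the CIS assumption in a usable form.

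With this in hand, I invoke irreducibility: $N(u) \neq N(v)$, so, without loss of generality, there exists $w \in N(u) \setminus N(v)$. Since every vertex of $K$ is adjacent to $v$, we have $w \notin K$, and $w \neq v$ since $w \in N(u)$ while $v \notin N(u)$. Then $\{v, w\}$ is a stable set (as $w \notin N(v)$); extend it to a maximal stable set $S$. Since $v \in S$ and $v \notin K$, the dichotomy forces $u \in S$. But $u$ and $w$ are adjacent, contradicting stability of $S$.

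The only mildly delicate point is the structural dichotomy for $S$; once that is established the rest is a short irreducibility argument. I do not anticipate any real obstacle — the proof should fit in a few lines.
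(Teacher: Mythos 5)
Your proof is correct and is essentially the same argument as the paper's: both extend a stable set of the form $\{v,w\}$ with $w\in N(u)\setminus N(v)$ to a maximal stable set and derive a contradiction with the CIS property at the clique $C_1=K\cup\{u\}$. The paper merely phrases it contrapositively (showing every neighbor of $u$ must be a neighbor of $v$, hence $N(u)=N(v)$ by symmetry), while you start from the irreducibility witness; your ``dichotomy'' is a slightly heavier packaging of the same single application of CIS.
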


\begin{proof}
Suppose, on the contrary, that $C_1$ and $C_2$ are two $t$-cliques of $\Gamma$ such that $|C_1\cap C_2|=t-1$. Let $v_1$ be the unique vertex contained in $C_1$ but not in $C_2$ and let $v_2$ be the unique vertex contained in $C_2$ but not in $C_1$.

Let $x$ be a neighbor of $v_1$. If $x$ is not adjacent to $v_2$ then $\{v_2,x\}$ is contained in some maximal stable set $S$ which must intersect the maximal clique $C_1$, but $x$ is adjacent to $v_1$ and $v_2$ is adjacent to every vertex of $C_1\setminus\{v_1\}$, which is a contradiction. It follows that $x$ is adjacent to $v_2$. We have shown that $N(v_1)\subseteq N(v_2)$ and, by symmetry, we obtain $N(v_1)= N(v_2)$, contradicting the fact that $\Gamma$ is irreducible.
\end{proof}

The following are immediate consequences.

\begin{corollary}\label{corollary:localCIS}
Let $\Gamma$ be an irreducible co-well-covered CIS graph with $\omega(\Gamma)=t$, let $v$ be a non-isolated vertex of $\Gamma$ and let $Y$ be the local graph of $\Gamma$ at $v$. Then $Y$ is a co-well-covered graph with $\omega(Y)=t-1$ and with the property that no two $(t-1)$-cliques intersect in a $(t-2)$-clique.
\end{corollary}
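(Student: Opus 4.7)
The plan is to exploit the standard bijective correspondence between cliques of the local graph $Y$ and cliques of $\Gamma$ containing $v$: a subset $C\subseteq N(v)$ is a clique of $Y$ if and only if $C\cup\{v\}$ is a clique of $\Gamma$, and this correspondence preserves both cardinality (up to $\pm 1$) and maximality. So every maximal clique of $Y$ is of the form $C\setminus\{v\}$ for some maximal clique $C$ of $\Gamma$ through $v$, and conversely.

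From here all three conclusions should follow cleanly. First, note that ``co-well-covered'' is the statement that all maximal cliques of $\Gamma$ have the same size, which must then be $\omega(\Gamma)=t$. Since $v$ is non-isolated it lies in at least one edge, hence in at least one maximal clique of $\Gamma$, which has size $t$; removing $v$ yields a $(t-1)$-clique of $Y$. Combined with the trivial upper bound $\omega(Y)\le\omega(\Gamma)-1=t-1$, this gives $\omega(Y)=t-1$. Applying the correspondence to \emph{every} maximal clique of $Y$ shows that they all come from $t$-cliques of $\Gamma$ through $v$ and therefore all have size exactly $t-1$, so $Y$ is co-well-covered.

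For the last property, I would argue by contradiction: if $D_1,D_2$ were $(t-1)$-cliques of $Y$ with $|D_1\cap D_2|=t-2$, then $C_i:=D_i\cup\{v\}$ for $i=1,2$ would be two $t$-cliques of $\Gamma$ with $|C_1\cap C_2|=|D_1\cap D_2|+1=t-1$, directly contradicting Lemma~\ref{coollemma} (which applies because $\Gamma$ is an irreducible CIS graph with $\omega(\Gamma)=t$). There is no real obstacle here; the entire argument is a routine unpacking of the definitions together with one invocation of Lemma~\ref{coollemma}, which is exactly why the authors list it as an ``immediate consequence''. The only small point to be careful about is verifying non-emptiness (i.e.\ that $Y$ actually has a maximal clique), which is ensured by the non-isolation hypothesis on $v$.
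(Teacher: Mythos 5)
Your proof is correct and follows exactly the paper's approach: the paper also notes that $C_Y$ is a clique of $Y$ if and only if $\{v\}\cup C_Y$ is a clique of $\Gamma$ containing $v$, and then invokes Lemma~\ref{coollemma}. Your write-up simply makes explicit the routine details (co-well-coveredness of $\Gamma$ forcing all maximal cliques through $v$ to have size $t$, and non-isolation of $v$ guaranteeing $Y$ has a maximal clique) that the paper leaves implicit.
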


\begin{proof}
 Clearly, $C_Y$ is a clique of $Y$ if and only if $\{v\}\cup C_Y$ is a clique of $\Gamma$ containing $v$.  The result then follows from Lemma~\ref{coollemma}.
\end{proof}

\begin{corollary}\label{Omega=2}
Let $\Gamma$ be a connected irreducible graph with $\omega(\Gamma)=2$. Then $\Gamma$ is CIS if and only if $\Gamma\cong K_2$.
\end{corollary}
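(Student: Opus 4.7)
The reverse direction is immediate: $K_2$ is connected, irreducible, CIS (its unique maximal clique and its two maximal stable sets all intersect trivially), and satisfies $\omega(K_2)=2$.

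For the forward direction, the plan is to apply Lemma~\ref{coollemma} with $t=2$. A $2$-clique is simply an edge and a $(t-1)$-clique is a single vertex, so the lemma says that no two edges of $\Gamma$ share a vertex. Equivalently, every vertex of $\Gamma$ lies in at most one edge, so $\Gamma$ is a (possibly empty) matching together with some isolated vertices.

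Finally, I use that $\Gamma$ is connected and $\omega(\Gamma)=2$. Connectedness forces $\Gamma$ to be a single component, hence either an isolated vertex $K_1$ or a single edge $K_2$; the former is ruled out by $\omega(\Gamma)=2$. Thus $\Gamma\cong K_2$, as desired. There is no real obstacle here: the corollary is essentially a one-line consequence of Lemma~\ref{coollemma}, and the only subtlety is to remember to exclude the trivial case $K_1$ using the hypothesis $\omega(\Gamma)=2$.
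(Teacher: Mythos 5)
Your proof is correct and follows exactly the route the paper intends: the corollary is listed as an immediate consequence of Lemma~\ref{coollemma}, and your specialization to $t=2$ (no two edges share a vertex, so a connected such graph with an edge is $K_2$) is precisely that argument, with the trivial $K_1$ case correctly excluded by $\omega(\Gamma)=2$.
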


\begin{corollary}\label{YOYO}
Let $\Gamma$ be a connected graph with $\omega(\Gamma)=2$. Then $\Gamma$ is CIS if and only if $\Gamma\cong K_{n,m}$ for some $n,m\geq 1$.
\end{corollary}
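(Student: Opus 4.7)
The plan is to reduce to the irreducible case already handled by Corollary~\ref{Omega=2}, using the machinery of the irreducible quotient developed in Proposition~\ref{irreducibleQuotient}.

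First, for the easy direction, I would check directly that $K_{n,m}$ (with $n,m\geq 1$) is CIS: since $\omega(K_{n,m})=2$, its maximal cliques are precisely its edges, while its maximal stable sets are the two parts of the bipartition; each part meets every edge, so $K_{n,m}$ is CIS.

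For the converse, let $\Gamma$ be a connected graph with $\omega(\Gamma)=2$ that is CIS, and let $X$ be its irreducible quotient, with natural projection $\pi$. Then $X$ is connected (as the image of a connected graph under $\pi$) and CIS by Proposition~\ref{irreducibleQuotient}. The next step is to pin down $\omega(X)$. Since each equivalence class of the relation ``having the same neighborhood'' is a stable set (as noted in the footnote just before Proposition~\ref{irreducibleQuotient}), every edge of $\Gamma$ joins vertices in different fibers and hence projects to an edge of $X$; thus $\omega(X)\geq \omega(\Gamma)=2$. Conversely, any clique of $X$ lifts to a clique of $\Gamma$ by picking one representative per fiber, so $\omega(X)\leq \omega(\Gamma)=2$. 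Therefore $\omega(X)=2$, and Corollary~\ref{Omega=2} forces $X\cong K_2$.

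It then remains to recover $\Gamma$ from the information $X\cong K_2$. The two vertices of $X$ correspond to two equivalence classes $A$ and $B$ of $\Gamma$, each of which is a stable set. As the unique edge of $X$ joins the two vertices, every vertex of $A$ is adjacent in $\Gamma$ to every vertex of $B$. Hence $\Gamma\cong K_{|A|,|B|}$, as desired. There is no substantial obstacle in this argument; the only point requiring minor care is verifying that $\omega$ is preserved under passage to the irreducible quotient, which is handled by the short lifting/projecting argument above.
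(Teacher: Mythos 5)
Your proof is correct and takes essentially the same route the paper intends: Corollary~\ref{YOYO} is stated there as an immediate consequence of Corollary~\ref{Omega=2} combined with the irreducible-quotient machinery of Proposition~\ref{irreducibleQuotient}, which is exactly your reduction (quotient is connected, CIS, has clique number $2$, hence is $K_2$, and then $\Gamma$ is recovered as $K_{n,m}$). The paper's alternative remark via Observation~\ref{obs:bull} is only a second possible derivation, so no comparison is needed.
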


We remark that Corollary~\ref{YOYO} can also be easily derived from the following easy observation (see, e.g.,~\cite{AndBorGur}).

\begin{observation}\label{obs:bull}
Let $\Gamma$ be a CIS graph. If  $P$ is an induced path of length three in $\Gamma$ then there exists a vertex in $\Gamma$ adjacent to the two midpoints of $P$ and non-adjacent to its endpoints.
\end{observation}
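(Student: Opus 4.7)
The plan is to apply the CIS property to a specific maximal stable set and a specific maximal clique built from the vertices of $P$. Write $P=v_1v_2v_3v_4$. Since $P$ is induced of length $3$, the endpoints $v_1$ and $v_4$ are non-adjacent, while the midpoints $v_2$ and $v_3$ are adjacent.

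First I would extend $\{v_1,v_4\}$ to a maximal stable set $S$, and extend $\{v_2,v_3\}$ to a maximal clique $C$. Because $v_2$ is adjacent to $v_1\in S$ and $v_3$ is adjacent to $v_4\in S$, neither midpoint lies in $S$. Similarly, because $v_1$ is non-adjacent to $v_3\in C$ and $v_4$ is non-adjacent to $v_2\in C$, neither endpoint lies in $C$. Hence $S\cap \{v_1,v_2,v_3,v_4\}=\{v_1,v_4\}$ and $C\cap \{v_1,v_2,v_3,v_4\}=\{v_2,v_3\}$ (with possible further vertices outside $P$).

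By the CIS hypothesis, there is some $w\in S\cap C$. From the previous paragraph, $w\notin\{v_1,v_2,v_3,v_4\}$. Since $w\in C$ and $v_2,v_3\in C$, the vertex $w$ is adjacent to both midpoints. Since $w\in S$ and $v_1,v_4\in S$, and $S$ is stable, $w$ is non-adjacent to both endpoints. This is exactly the required conclusion.

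I do not anticipate a genuine obstacle here: the only thing to be slightly careful about is verifying that the intersection vertex $w$ produced by the CIS property is distinct from the four vertices of $P$, which is immediate from the observations about $S$ and $C$ above.
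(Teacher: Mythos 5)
Your proof is correct and is exactly the standard argument this observation rests on (the paper states it without proof, citing it as an easy known fact): extend the endpoints to a maximal stable set $S$ and the midpoints to a maximal clique $C$, note that no vertex of $P$ can lie in $S\cap C$, and let the CIS property produce the desired vertex. All the adjacency/non-adjacency checks you make use precisely the induced-path structure, so there is no gap.
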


Our last result of this section shows that, among connected CIS graphs, the graphs $L(K_{n,n})$ are characterized by their local graphs. This will be needed in later sections.

\begin{theorem}\label{Local TwiceComple}
Let $\Gamma$ be a connected CIS graph such that every local graph of $\Gamma$ is the disjoint union of two complete graphs. Then $\Gamma\cong L(K_{n,n})$ for some $n\geq 1$.
\end{theorem}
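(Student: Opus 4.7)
The plan is to recognize $\Gamma$ as the line graph $L(H)$ of a triangle-free graph $H$, and then to exploit the CIS hypothesis to force $H\cong K_{n,n}$.

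\emph{Step 1 (line graph structure).} The local-graph hypothesis implies that every vertex $v$ of $\Gamma$ lies in exactly two maximal cliques $A(v)$ and $B(v)$, meeting only in $v$, and that every edge of $\Gamma$ lies in a unique maximal clique. By Krausz's characterization of line graphs, $\Gamma\cong L(H)$ for the simple graph $H$ whose vertex set is the family of maximal cliques of $\Gamma$, where the vertex $v\in V(\Gamma)$ corresponds to the edge of $H$ joining $A(v)$ with $B(v)$.

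\emph{Step 2 (structure of $H$).} Connectedness of $\Gamma$ carries over to $H$, and since both components of every local graph are nonempty, each maximal clique of $\Gamma$ has size at least $2$, so $H$ has minimum degree at least $2$. Moreover, $H$ is triangle-free: if $C_1,C_2,C_3$ formed a triangle in $H$ with pairwise intersections $v_{ij}$, then $v_{13}$ and $v_{23}$ would be adjacent vertices of $\Gamma$ (both lie in $C_3$), yet $v_{13}\in A(v_{12})\setminus\{v_{12}\}$ and $v_{23}\in B(v_{12})\setminus\{v_{12}\}$, contradicting the disjointness of the two components of the local graph at $v_{12}$.

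\emph{Step 3 (CIS as a matching condition).} Under $\Gamma=L(H)$, the maximal cliques of $\Gamma$ are the vertex-stars of $H$ and the maximal stable sets of $\Gamma$ are the maximal matchings of $H$. The CIS hypothesis therefore translates into the statement that every maximal matching of $H$ contains an edge incident with every vertex of $H$; equivalently, every maximal matching of $H$ is a perfect matching.

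\emph{Step 4 (identification of $H$).} By a classical theorem of Sumner on ``randomly matchable'' graphs, the only connected graphs in which every maximal matching is perfect are $K_{2n}$ and $K_{n,n}$. Triangle-freeness rules out $K_{2n}$ for $n\geq 2$, so $H\cong K_{n,n}$ and hence $\Gamma\cong L(K_{n,n})$.

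\textbf{The hardest step.} Steps 1--3 are essentially structural bookkeeping that follow mechanically from the local-graph hypothesis and the standard theory of line graphs. The real work is Step 4, i.e., Sumner's theorem restricted to the triangle-free case. A self-contained argument would proceed in two stages: (i) if $H$ is not bipartite, pick a shortest odd cycle $C$ (of length at least $5$ by triangle-freeness) and build a maximal matching of $H$ by combining a near-perfect matching of $C$ with a maximal matching of $H\setminus V(C)$, which will fail to saturate some vertex of $C$; (ii) having shown $H$ bipartite, an analogous construction built around a missing cross edge between the two parts yields a maximal matching that omits an endpoint of that non-edge, forcing all cross edges to be present and the two parts to have equal size.
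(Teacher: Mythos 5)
Your proof is correct and follows essentially the same route as the paper's: recognize $\Gamma$ as the line graph of a connected triangle-free graph, translate the CIS condition into the statement that every maximal matching of that graph is perfect, and invoke Sumner's theorem on randomly matchable graphs to conclude $K_{n,n}$. The only cosmetic differences are that the paper cites a ready-made characterization of line graphs of triangle-free graphs (Kloks--Kratsch--M\"uller) instead of assembling the Krausz partition by hand, and it disposes of the degenerate case of an isolated vertex ($\Gamma\cong K_1\cong L(K_{1,1})$) explicitly before assuming every vertex lies in two nonempty maximal cliques.
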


\begin{proof}
If $\Gamma$ contains an isolated vertex then, by connectedness, we have $\Gamma\cong K_1\cong L(K_{1,1})$. We thus assume that $\Gamma$ has no isolated vertex. Since every local graph of $\Gamma$ consists of two disjoint complete graphs, every vertex is contained in exactly two maximal cliques, and these two cliques intersect in only that vertex. Let $u$ and $v$ be distinct vertices. By the previous observation, there exists a maximal clique containing $u$ but not $v$ and thus $N[u]\neq N[v]$.

It follows that distinct vertices have distinct closed neighborhoods and, by~\cite[Theorem~16]{KloksKM1995}, $\Gamma$ is the line graph of a triangle-free graph $Z$. Clearly, $Z$ is connected. A stable set in $\Gamma$ corresponds to a matching in $Z$ and a maximal stable set in $\Gamma$ corresponds to a maximal matching in $Z$. Since $Z$ is triangle-free, a clique in $\Gamma$ corresponds to a star in $Z$ (that is, to a set of edges incident with a fixed vertex), and a maximal clique in $\Gamma$ corresponds to an maximal star in $Z$ (that is, to the set of all edges incident with a fixed vertex).

Since $\Gamma$ is CIS, every maximal stable set of $\Gamma$ intersects every maximal clique of $\Gamma$. Consequently, every maximal matching of $Z$ intersects every maximal star of $Z$, which means that every maximal matching of $Z$ is actually a perfect matching. By~\cite[Theorem~1]{Sumner1979}, it follows that $Z$ is isomorphic to $K_{2n}$ for some $n\ge 2$ or to $K_{n,n}$ for some $n\ge 1$. Since $Z$ is triangle-free, $Z\cong K_{n,n}$ and the result follows.
\end{proof}

\section{Vertex-transitive CIS graphs}\label{sec:VT}

Our first important result is the following characterization of CIS vertex-transitive graphs, which generalizes~\cite[Theorem~3]{BorosGM2014}.

\begin{theorem}\label{thm:VT-CIS}
Let $\Gamma$ be a vertex-transitive graph of order $n$. Then $\Gamma$ is CIS if and only if all maximal stable sets are of size $\alpha(\Gamma)$, all maximal cliques are of size $\omega(\Gamma)$, and $\alpha(\Gamma)\omega(\Gamma)= n$.
\end{theorem}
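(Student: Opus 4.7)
The plan is to prove both directions via double-counting arguments that exploit vertex-transitivity; throughout, let $G = \mathrm{Aut}(\Gamma)$, which acts transitively on $V(\Gamma)$, and note that every point stabilizer $G_v$ has order $|G|/n$.

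For the $(\Leftarrow)$ direction, I would fix an arbitrary maximum stable set $S$ and an arbitrary maximal clique $C$ (which is maximum by hypothesis) and evaluate $\sum_{g \in G}|S \cap g(C)|$ in two ways. By orbit–stabilizer, each $v \in V(\Gamma)$ lies in exactly $|C|\cdot|G_v| = \omega|G|/n$ translates of $C$, so summing over $v \in S$ gives $\alpha\omega|G|/n = |G|$, using the hypothesis $\alpha\omega = n$. On the other hand, each summand satisfies $|S \cap g(C)| \leq 1$, being the intersection of a stable set with a clique. Since $|G|$ nonnegative integers each bounded above by $1$ sum to $|G|$, every summand must equal $1$; taking $g = e$ yields $|S \cap C| \geq 1$. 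Because the hypotheses force every maximal stable set and every maximal clique to be maximum, this shows $\Gamma$ is CIS.

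For the $(\Rightarrow)$ direction, suppose $\Gamma$ is CIS. Let $m_s$ (respectively $m_c$) denote the number of maximum stable sets (respectively maximum cliques) through any given vertex; these are well-defined constants by vertex-transitivity. Let $N_s$ and $N_c$ denote the total numbers of maximum stable sets and maximum cliques. Standard incidence counts give $N_s\alpha = nm_s$ and $N_c\omega = nm_c$. The CIS property, together with the stable–clique bound, forces $|S \cap C| = 1$ for every maximum stable set $S$ and every maximum clique $C$, so double-counting such incidences yields $N_s N_c = \sum_v m_s m_c = nm_sm_c$. Substituting the formulas for $N_s$ and $N_c$ and cancelling collapses the identity to $\alpha\omega = n$.

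Well-coveredness then follows similarly: for any maximal stable set $T$, the CIS property and the stable–clique bound give $|C \cap T| = 1$ for every maximum clique $C$, so $N_c = \sum_C |C \cap T| = |T|m_c$; combined with $N_c = nm_c/\omega$, this forces $|T| = n/\omega = \alpha$. Applying Proposition~\ref{lemma:CIS-complements}(1) to obtain that $\overline\Gamma$ is also a vertex-transitive CIS graph and running the same argument on $\overline\Gamma$ shows $\Gamma$ is co-well-covered. The main obstacle is essentially conceptual rather than technical: identifying the right double counts, and recognizing that once vertex-transitivity provides uniform counts at every vertex, the stable–clique upper bound $|S \cap C| \leq 1$ is automatically tight on average, hence tight pointwise.
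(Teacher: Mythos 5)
Your proposal is correct. The backward direction is essentially the paper's own argument in disguise: your sum $\sum_{g\in G}|S\cap g(C)|$ is exactly the paper's count of the triple set $\mathcal{I}(C,S)=\{(x,y,g): x\in C,\ y\in S,\ x^g=y\}$, evaluated via orbit--stabilizer and the bound $|S\cap g(C)|\le 1$. The forward direction, however, takes a genuinely different route. The paper simply runs the same group count in reverse: CIS forces $|C^g\cap S|=1$ for every $g\in G$, so $|C||S||G_x|=|\mathcal{I}(C,S)|=|G|=n|G_x|$, giving $|C||S|=n$ for \emph{every} maximal clique $C$ and \emph{every} maximal stable set $S$ simultaneously; this one identity immediately yields well-coveredness, co-well-coveredness, and $\alpha\omega=n$ in a single stroke. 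You instead introduce the per-vertex counts $m_s,m_c$ of maximum stable sets and maximum cliques (constant by vertex-transitivity, and positive since every vertex lies in some automorphic image of a maximum stable set and of a maximum clique), double count incidences to get $N_sN_c=nm_sm_c$ and hence $\alpha\omega=n$, then count a maximal stable set $T$ against maximum cliques to get $|T|=n/\omega=\alpha$, and pass to the complement via Proposition~\ref{lemma:CIS-complements}(1) for co-well-coveredness. Your version is somewhat longer and splits into three steps, but it isolates what is really used in that direction: only the combinatorial regularity of the incidence counts $m_s$ and $m_c$, not the full group action. The paper's version buys uniformity (one count proves everything, applied directly to arbitrary maximal rather than maximum sets). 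One small wording point: a vertex does not lie in ``$|C|\cdot|G_v|$ translates of $C$'' as sets; rather there are $|C|\cdot|G_v|$ group elements $g$ with $v\in g(C)$ --- which is what your sum actually requires, so the computation stands.
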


\begin{proof}
Let $G$ be the automorphism group of $\Gamma$, let $C$ be a maximal clique, and let $S$ be a maximal stable set of $\Gamma$. Denote by $\mathcal{I}(C,S)$ the set of triples $(x,y,g)$ such that $x\in C$, $y\in S$, $g\in G$ and $x^g=y$. Given $(x,y)\in C\times S$, the set of elements of $G$ which map $x$ to $y$ is a coset of $G_x$ and hence has cardinality $|G_x|$. This shows that $|\mathcal{I}(C,S)|=|C||S||G_x|$.  For every $g\in G$, $C^g$ is a maximal clique and hence $|C^g\cap S|\leq 1$.

If $\Gamma$ is CIS then $|C^g\cap S|=1$ for every $g\in G$. In particular, for every $g\in G$, there is a unique choice of $(x,y)\in C\times S$ such that $(x,y,g)\in \mathcal{I}(C,S)$. It follows that $|G|=|\mathcal{I}(C,S)|$.  By the orbit-stabilizer theorem, we have $|G|=n|G_x|$ and hence $n=|C||S|$. Since the choice of $C$ and $S$ was arbitrary, all maximal stable sets are of size $|S|$ and all maximal cliques are of size $|C|$.

Conversely, if $|C||S|=n$ then $|\mathcal{I}(C,S)|=|C||S||G_x|=n|G_x|=|G|$. Since $|C^g\cap S|\leq 1$ for every $g\in G$, it follows that in fact $|C^g\cap S|=1$ for every $g\in G$. Setting $g=1$, we obtain $|C\cap S|=1$. Since $C$ and $S$ were arbitrary, it follows that $\Gamma$ is CIS.
\end{proof}

\begin{corollary}\label{cor:regular-wc-co-wc}
Every vertex-transitive CIS graph is regular, well-covered, and co-well-covered.
\end{corollary}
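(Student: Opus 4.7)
The corollary should follow almost immediately from Theorem~\ref{thm:VT-CIS}, with one extra ingredient (regularity) coming directly from vertex-transitivity. My plan has three short steps.

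First, I would observe that any vertex-transitive graph is regular: if $u,v \in V(\Gamma)$ and $g\in \mathrm{Aut}(\Gamma)$ satisfies $u^g = v$, then $g$ maps $N(u)$ bijectively onto $N(v)$, so $|N(u)|=|N(v)|$. This part uses nothing about the CIS hypothesis.

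Second, I would invoke Theorem~\ref{thm:VT-CIS}: since $\Gamma$ is CIS and vertex-transitive, every maximal stable set has size $\alpha(\Gamma)$. In particular, all maximal stable sets have the same size, which is by definition the statement that $\Gamma$ is well-covered.

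Third, for co-well-covered, I would note that maximal stable sets of $\overline{\Gamma}$ are exactly maximal cliques of $\Gamma$. By Theorem~\ref{thm:VT-CIS} again, all maximal cliques of $\Gamma$ have size $\omega(\Gamma)$, hence all maximal stable sets of $\overline{\Gamma}$ have the same size. Thus $\overline{\Gamma}$ is well-covered, i.e.\ $\Gamma$ is co-well-covered. There is no real obstacle here; the only thing to be careful about is not to accidentally appeal to the converse direction of Theorem~\ref{thm:VT-CIS} (we only need the forward direction), and not to need vertex-transitivity of $\overline{\Gamma}$ (which does hold, but is irrelevant since the size condition on maximal cliques of $\Gamma$ is already provided by the theorem).
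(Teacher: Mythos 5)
Your proposal is correct and is exactly the argument the paper intends: the paper gives no explicit proof because the corollary follows immediately from the forward direction of Theorem~\ref{thm:VT-CIS} (all maximal stable sets have size $\alpha(\Gamma)$, all maximal cliques have size $\omega(\Gamma)$), together with the standard fact that vertex-transitive graphs are regular. Your care in noting that only the forward implication is needed, and that vertex-transitivity of the complement is not required, is appropriate but does not change the route.
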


Corollary~\ref{cor:regular-wc-co-wc} suggests that, in order to study vertex-transitive CIS graphs, it can be useful to first study well-covered CIS graphs. This is what we do in Section~\ref{sec:omega-3} but, first, we construct some infinite families of connected irreducible vertex-transitive CIS graphs.

\subsection{Examples}\label{sec:examples}

The proof of the following result is straightforward.

\begin{proposition}\label{prop:LKnn}
If $n\geq 1$ then $L(K_{n,n})$ is a connected vertex-transitive CIS graph of order $n^2$ and valency $2(n-1)$ with $\alpha(L(K_{n,n}))=\omega(L(K_{n,n}))=n$.
\end{proposition}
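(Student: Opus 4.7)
The plan is to verify each assertion by translating to $K_{n,n}$ via the line-graph correspondence, and then to deduce the CIS property from Theorem~\ref{thm:VT-CIS}.

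First I would compute the basic parameters. The graph $K_{n,n}$ has exactly $n^2$ edges, so $L(K_{n,n})$ has $n^2$ vertices. Each edge of $K_{n,n}$ is incident to two vertices of valency $n$, so in $L(K_{n,n})$ its valency is $2(n-1)$. For vertex-transitivity, I would observe that $K_{n,n}$ is edge-transitive (its automorphism group, which contains $S_n \wr S_2$, acts transitively on edges), and this immediately implies that $L(K_{n,n})$ is vertex-transitive. Connectedness is clear since $K_{n,n}$ itself is connected.

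Next I would compute $\alpha$ and $\omega$. Since $K_{n,n}$ is triangle-free, cliques of $L(K_{n,n})$ correspond to stars of $K_{n,n}$ (as noted in the proof of Theorem~\ref{Local TwiceComple}), and maximal cliques correspond to maximal stars, i.e.\ the sets of all edges incident to a fixed vertex. Every vertex of $K_{n,n}$ has valency $n$, so every maximal clique of $L(K_{n,n})$ has size exactly $n$; in particular $\omega(L(K_{n,n}))=n$. Similarly, stable sets of $L(K_{n,n})$ correspond to matchings of $K_{n,n}$, so maximal stable sets correspond to maximal matchings. I would then argue that every maximal matching of $K_{n,n}$ is perfect: if $M$ is a matching leaving some vertex $u$ on one side unmatched, then by counting an equal number of vertices on the other side are unmatched, and any such vertex can be joined to $u$ to extend $M$. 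Hence every maximal matching of $K_{n,n}$ has size $n$, and so every maximal stable set of $L(K_{n,n})$ has size $n$, giving $\alpha(L(K_{n,n}))=n$.

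Finally, I would appeal to Theorem~\ref{thm:VT-CIS}. We have established that $L(K_{n,n})$ is vertex-transitive of order $n^2$, with all maximal cliques of size $\omega=n$, all maximal stable sets of size $\alpha=n$, and $\alpha\omega = n\cdot n = n^2$. The theorem then immediately yields that $L(K_{n,n})$ is CIS. There is no real obstacle here; the only step that requires a tiny bit of care is the claim that maximal matchings of $K_{n,n}$ are perfect, but the pigeonhole argument above handles it in one line.
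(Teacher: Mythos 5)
Your proof is correct; the paper omits the argument entirely, stating only that it is straightforward. Your route (edges/stars/matchings of $K_{n,n}$ translated through the line graph, the pigeonhole observation that maximal matchings of $K_{n,n}$ are perfect, then Theorem~\ref{thm:VT-CIS}) is exactly the standard verification the authors intend, and all steps check out.
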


\begin{definition}
For $n\geq 3$, let $PX(n)$ be the graph with vertex-set $\ZZ_n\times\ZZ_2\times\ZZ_2$ and edge-set $\{(i,x,y),(i + 1,y,z)\mid i\in \ZZ_n,~x,y,z\in\ZZ_2\}$. To $PX(n)$, we add the following set of edges $\{(i,x,y),(i,u,v)\mid i\in \ZZ_n,~x,y,u,v\in\ZZ_2,~(x,y)\neq (u,v)\}$  to obtain the graph $Q_n$.
\end{definition}

The graphs $PX(n)$ were first studied in~\cite{PraegerX1989} (where they were denoted $C(2,n,2)$). Proposition~\ref{prop:Gamma_r} shows that the graphs $Q_n$ are vertex-transitive CIS graphs of valency $7$.  It turns out that there are only finitely many other vertex-transitive CIS graphs of valency at most $7$ (see Corollary~\ref{cor:VT-CIS-valency-7}). Proposition~\ref{prop:Gamma_r} also shows that $Q_n$ is not Cayley when $n$ is prime.

\begin{proposition}\label{prop:Gamma_r}
Let $n\geq 4$. The graph $Q_n$ is a connected vertex-transitive CIS graph of order $4n$ and valency $7$ with  $\alpha(Q_n)=n$ and $\omega(Q_n)=4$. Moreover, if $n$ is prime then $Q_n$ is not a Cayley graph.
\end{proposition}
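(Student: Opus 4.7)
The plan is to verify the combinatorial parameters of $Q_n$, derive the CIS property via Theorem~\ref{thm:VT-CIS}, and finally argue non-Cayley-ness for prime $n$ by analysing the full automorphism group on the fiber block system.

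For the parameters, $|V(Q_n)|=4n$ is by construction, and each vertex $(i,x,y)$ has $3$ neighbors in its own fiber $F_i:=\{i\}\times\mathbb{Z}_2\times\mathbb{Z}_2$ (a $K_4$), two in $F_{i+1}$, and two in $F_{i-1}$, giving valency $7$; connectedness is immediate from the edges between consecutive fibers. I would next show that every maximal clique is either a fiber $F_i$ or a \emph{bridge} clique $B_i^y:=\{(i,0,y),(i,1,y),(i+1,y,0),(i+1,y,1)\}$: a clique with vertices in two distinct fibers must have them consecutive (since fibers at distance $\geq 2$ in $C_n$ are non-adjacent), and on consecutive fibers the adjacency rule forces all its vertices in the lower fiber to share a common second coordinate $y$, putting the clique inside $B_i^y$; cliques in three consecutive fibers cannot exist because the two outermost fibers are non-adjacent. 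All maximal cliques thus have size $4$, so $\omega(Q_n)=4$. Each fiber being a $K_4$ forces $\alpha(Q_n)\leq n$; a cyclic transversal $\{(i,1-y_{i-1},y_i):i\in\mathbb{Z}_n\}$ for any $(y_i)\in\mathbb{Z}_2^n$ is a stable set of size $n$, and any maximal stable set missing a fiber $F_i$ admits a greedy extension there (the at most two constraints from $F_{i\pm 1}$ leave a free choice), so every maximal stable set has size $n$.

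For vertex-transitivity I would exhibit the cyclic shift $\tau:(i,x,y)\mapsto(i+1,x,y)$ and, for each $f:\mathbb{Z}_n\to\mathbb{Z}_2$, the fiber-preserving automorphism $\phi_f:(i,x,y)\mapsto(i,x+f(i),y+f(i+1))$; suitable choice of $f$ shows $\{\phi_f:f\in\mathbb{Z}_2^n\}$ is transitive on each fiber, and combined with $\tau$ this gives transitivity on $V(Q_n)$. Since $\omega(Q_n)\alpha(Q_n)=4n=|V(Q_n)|$, Theorem~\ref{thm:VT-CIS} implies $Q_n$ is CIS.

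The main obstacle is showing $Q_n$ is not Cayley when $n$ is prime. The structural starting point is that the fibers form a block system for $\mathrm{Aut}(Q_n)$: at each vertex $v=(i,x,y)$ the three maximal cliques $F_i,B_i^y,B_{i-1}^x$ pairwise intersect in sizes $\{2,2,1\}$, so $F_i$ is canonically distinguished as the clique meeting each of the others in two vertices. Hence the induced action on fibers lies in $\mathrm{Aut}(C_n)=D_n$. Now suppose $R\leq\mathrm{Aut}(Q_n)$ is regular of order $4n$ with $n\geq 5$ prime. A Sylow count forces the Sylow $n$-subgroup $P\trianglelefteq R$ to be unique and normal, and since $\gcd(n,4!)=1$ no non-identity element of $P$ fixes a fiber setwise; thus $P$ acts as a full $n$-cycle on fibers and a direct structural analysis of edge-preservation forces $P=\langle g\rangle$ with $g(i,x,y)=(i+1,x+c_i,y+c_{i+1})$ for some $c:\mathbb{Z}_n\to\mathbb{Z}_2$. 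Let $Q$ be a Schur--Zassenhaus complement of $P$ and let $K_R\trianglelefteq R$ be the kernel of the fiber action; then $K_R\leq\{\phi_f:f\in\mathbb{Z}_2^n\}$ and $|K_R|\in\{2,4\}$ (since $R$ cannot embed in $D_n$). The calculation $g\phi_fg^{-1}=\phi_{f'}$ with $f'(i)=f(i-1)$ shows that the only $\phi_f$ centralizing $g$ are $f\equiv 0$ and $f\equiv 1$; this rules out $|K_R|=4$ (else $g$ would permute three non-identity $\phi_f$'s, impossible as $\gcd(n,6)=1$) and forces $K_R=\{1,\phi_1\}$. An element $q\in Q\setminus K_R$ acts on fibers as the unique reflection in $D_n$ fixing $F_0$; the analogous structural analysis yields $q(i,x,y)=(-i,y+d_i,x+d_{i-1})$ for some $d:\mathbb{Z}_n\to\mathbb{Z}_2$. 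Computing $q^2(i,x,y)=(i,x+d_{i-1}+d_{-i},y+d_i+d_{-i-1})$ and matching: if $q^2=1$ then $d_j=d_{-j-1}$ for all $j$, so in particular $d_0=d_{-1}$, contradicting the free action of $Q$ on $F_0$ (which requires $d_0\neq d_{-1}$); if $q^2=\phi_1$ then $d_j+d_{-j-1}=1$ for all $j$, which at the unique fixed point $j=(n-1)/2$ of $j\mapsto -j-1$ becomes $0=1$. Both cases yield contradictions, so no regular subgroup of order $4n$ exists and $Q_n$ is not Cayley.
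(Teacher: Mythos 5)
Your proposal is correct, and for the two substantive parts of the statement it takes a genuinely different route from the paper. The parameter computations ($\omega=4$ via the fiber/bridge-clique dichotomy, $\alpha=n$ via the transversal and the one-free-vertex greedy extension) match the paper's in substance. For vertex-transitivity, the paper identifies $Q_n$ as $PX(n)=C(2,n,2)$ plus intra-fiber edges and imports vertex-transitivity of $PX(n)$ from Praeger--Xu; you instead exhibit the explicit automorphisms $\tau$ and $\phi_f$, which is more self-contained and verifiably correct ($\phi_f$ preserves the adjacency rule because the matching coordinate $y+f(i+1)$ is shifted consistently on both ends of a cross-fiber edge). For non-Cayleyness the divergence is larger: the paper shows $\mathrm{Aut}(Q_n)=\mathrm{Aut}(PX(n))$ (via ``red'' edges) and then cites the published theorem that $PX(n)$ is non-Cayley for prime $n$, whereas you give a direct group-theoretic proof: the fiber partition is a block system (your identification of $F_i$ as the unique one of the three maximal cliques through $v$ meeting both others in two vertices is a clean, correct replacement for the red-edge argument), the quotient action lies in $D_n$, and a Sylow/Schur--Zassenhaus analysis of a hypothetical regular subgroup forces the explicit forms $g(i,x,y)=(i+1,x+c_i,y+c_{i+1})$ and $q(i,x,y)=(-i,y+d_i,x+d_{i-1})$, after which the $q^2\in\{1,\phi_1\}$ dichotomy collapses against fixed-point-freeness and the odd-order fixed point of $j\mapsto -j-1$. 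I checked these computations ($g\phi_fg^{-1}=\phi_{f'}$ with $f'(i)=f(i-1)$, the centralizer $\{\phi_0,\phi_1\}$, the two contradictions) and they are all correct. The trade-off: the paper's proof is shorter but rests on external results about $PX(n)$; yours is longer but elementary and fully self-contained. In a final write-up you should expand the two places labeled ``direct structural analysis'' --- namely, that a fiber-rotating automorphism sends forward bridge cliques to forward ones (pinning down the form of $g$ and showing the kernel of the fiber action is exactly $\{\phi_f\}$), while a fiber-reflecting one swaps forward and backward bridges (pinning down $q$) --- and note explicitly that $K_R\le Q$ because $D_n$ has no subgroup of order $4$ for odd $n$, so that an element of $R$ projecting to the reflection fixing $F_0$ indeed exists; these are routine but are the load-bearing steps.
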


\begin{proof}
Clearly, $Q_n$ has order $4n$, valency $7$, and is  connected and irreducible. Let $C$ be a clique of $Q_n$. Since vertices that are adjacent in $Q_n$ have first coordinate differing by at most one, it follows that $C$ is contained in the graph induced on vertices having first coordinate either $i$ or $i+1$, for some $i\in \ZZ_n$. It is easy to check that every maximal clique of this graph (which is isomorphic to $\overline{2C_4}$) has size $4$. It follows that $Q_n$ is co-well-covered with $\omega(Q_n)=4$.

Let $S$ be a maximal stable set of $Q_n$.  Clearly, for every $i\in\ZZ_n$,  $S$ contains at most one vertex with first coordinate $i$.  Suppose that, for some $i\in\ZZ_n$, $S$ does not contain a vertex with first coordinate $i$. Without loss of generality, we may assume that $(i-1,a,b),(i+1,c,d)\in S$ for some $a,b,c,d\in \ZZ_2$. Note that $(i,b+1,c+1)$ is adjacent to neither $(i-1,a,b)$ nor $(i+1,c,d)$ and thus to no element of $S$. This contradicts the maximality of $S$. It follows that $Q_n$ is well-covered with $\alpha(Q_n)=n$.

We say that two vertices of $Q_n$ are \emph{related} if they have the same first coordinate. This is clearly an equivalence relation. We denote the corresponding partition of $V(Q_n)$ by $\mathcal{B}$. If $n\geq 5$, it follows by \cite[Lemma 2.8]{PraegerX1989} that every automorphism of $PX(n)$ preserves $\mathcal{B}$. Since the graph obtained from $PX(n)$ by adding an edge between every related pair of vertices is $Q_n$, it follows that every automorphism of $PX(n)$ is an automorphism of $Q_n$. By~\cite[Theorem 2.10]{PraegerX1989}, $PX(n)$ is vertex-transitive and thus so is $Q_n$. If $n=4$ then, by~\cite[Theorem 2.10]{PraegerX1989}, $PX(n)$ contains a vertex-transitive group of automorphisms that preserves $\mathcal{B}$ and we argue as before. We have shown that $Q_n$ is vertex-transitive and thus by Theorem \ref{thm:VT-CIS}, it is CIS.

Call an edge of $Q_n$ \emph{red} if it is contained in two $4$-cliques. It is easy to check that two vertices of $Q_n$ are connected by a red path if and only if they have the same first coordinate. Since red edges are mapped to red edges by automorphisms of $Q_n$, this shows that $\mathcal{B}$ is preserved under the group of automorphism of $Q_n$. On the other hand, if we remove all edges between related vertices, we obtain $PX(n)$ and thus every automorphism of $Q_n$ is also an automorphism of $PX(n)$. Together with the previous paragraph, this implies that if $n\geq 5$ then $Q_n$ and $PX(n)$ have the same automorphism group.

Recall that a graph is Cayley if and only its automorphism group contains a regular subgroup and thus $Q_n$ is Cayley if and only if $PX(n)$ is. On other hand, it was shown in~\cite[Theorem 3]{MR1250993} that $PX(n)$ is not Cayley when $n$ is prime. This concludes the proof.
\end{proof}

\begin{proposition}\label{prop:R_n}
Let $n\geq 2$ and let $R_n$ be the Cayley graph on  $\ZZ_{2n}\times \ZZ_4$ with connection set {$S=\{(0,1),(0,3),(n,0),(n,2),(2i,2),(2i+1,0)\mid 0\leq i\leq n-1\}$.} Then $R_n$ is a connected CIS graph of order $8n$ and valency $2n+3$ with $\alpha(R_n)=2n$ and $\omega(R_n)=4$.
\end{proposition}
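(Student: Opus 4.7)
The plan is to apply Theorem~\ref{thm:VT-CIS}: since $R_n$ is a Cayley graph it is automatically vertex-transitive, so it suffices to show $\omega(R_n)=4$ and $\alpha(R_n)=2n$, that every maximal clique has size $4$ and every maximal stable set has size $2n$, and that $4\cdot 2n=8n=|V(R_n)|$. I would first dispatch the easy preliminaries: the order is $8n$; connectedness follows because $(0,1),(1,0)\in S$ generate $\ZZ_{2n}\times\ZZ_4$; and $|S|=2n+3$ once one notes the single overlap in the list defining $S$, namely that $(n,0)$ lies among the $(2i+1,0)$ when $n$ is odd and $(n,2)$ lies among the $(2i,2)$ when $n$ is even.
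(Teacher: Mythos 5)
Your overall strategy is exactly the paper's: a Cayley graph is vertex-transitive, so by Theorem~\ref{thm:VT-CIS} it suffices to check that every maximal clique has size $4$, every maximal stable set has size $2n$, and $4\cdot 2n=8n=|V(R_n)|$. Your preliminary checks are also fine: the order, connectedness via $(0,1),(1,0)\in S$, and the valency count $|S|=2n+3$ with the single overlap you correctly identify (the set is also inverse-closed and avoids the identity, which is worth a word).

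However, the proposal stops exactly where the real work begins: you never verify that all maximal cliques have size $4$ and all maximal stable sets have size $2n$, and these two claims are the entire substance of the proposition. This is not a routine omission --- the stable-set claim in particular requires a genuine argument. The paper proceeds by noting that vertex-transitivity reduces everything to the cliques and stable sets containing $(0,0)$. For cliques it enumerates them explicitly (namely $\{(0,0),(0,1),(0,2),(0,3)\}$, $\{(0,0),(2i+1,0),(n+2i+1,0),(n,0)\}$ and $\{(0,0),(2i,2),(n+2i,2),(n,0)\}$, all of size $4$). For a maximal stable set $M$ containing $(0,0)$ it splits into cases: if $M\subseteq \ZZ_{2n}\times\{0,2\}$ one exhibits a vertex with no neighbor in $M$, contradicting maximality; otherwise one takes $(x,y)\in M$ with $y$ odd and shows that the subgraph induced on $\ZZ_{2n}\times\ZZ_4\setminus\bigl(N[(0,0)]\cup N[(x,y)]\bigr)$ is a disjoint union of two isolated vertices and $n-2$ four-cycles, whence every maximal stable set of that subgraph has size $2n-2$ and $|M|=2n$. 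Some argument of this kind must be supplied; as written, your proof is a correct setup with the main body missing.
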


\begin{proof}
Clearly $R_n$ is of valency $2n+3$ and, since $S$ generates $\ZZ_{2n}\times \ZZ_4$, $R_n$ is connected.  Since $R_n$ is vertex-transitive, it suffices by Theorem~\ref{thm:VT-CIS} to prove that all maximal cliques containing $(0,0)$ are of size $4$ and that all maximal stable sets containing $(0,0)$ are of size $2n$. We will assume that $n$ is even. The argument in the case when $n$ is odd is analogous.

It is not difficult to see that every maximal clique containing $(0,0)$ is one of the following:  $\{(0,0),(0,1),(0,2),(0,3)\}$, $\{(0,0),(2i+1,0),(n+2i+1,0),(n,0)\}$  or $\{(0,0),(2i,2),(n+2i,2),(n,0)\}$,  for some $i\in \{0,1,\ldots,n-1\}$. Therefore, all maximal cliques containing $(0,0)$ are of size $4$.

Let $M$ be a maximal stable set containing $(0,0)$. Note that $M\cap S=\emptyset$. Suppose first that $M$ is contained in  $\ZZ_{2n}\times \{0,2\}$. Since $(0,0)\in M$ and $M$ is a stable set, $M$ is contained in $\bigcup_{i\in \{0,1,\ldots, n-1\}}\{(2i,0),(2i+1,2)\}$.  If $(2j,0)\in M$ for some $j\in \{1\ldots,n-1\}$ then $(2j+ n,1)$ has no neighbor in $M$, contradicting the fact that $M$ is a maximal stable set. Similarly, if $(2j+1,2)\in M$ for some $j\in \{0,1\ldots,n-1\}$ then $(n+2j+1,1)$ has no neighbor in $M$, again a contradiction.

We may thus assume that $M$ contains some element $(x,y)$ with $y\in\{1,3\}$. We will deal with the case when $y=1$ and $x=2k-1$ is odd. The other cases can be dealt with similarly. Let $X=\ZZ_{2n}\times \ZZ_4\setminus (N[(0,0)] \cup N[(2k-1,1)])$. Note that $M\setminus \{(0,0),(2k-1,1)\}\subseteq X$ and that
$$X=
\left(\bigcup_{\substack{i\in \{1,3,\ldots,2n-1\}\setminus\{2k-1\}\\j\in \{2,4,\ldots,2n-2\}}}
\{(j,0),(i,1),(i,2),(j,3)\}\right)\setminus \{(n,0),(n+2k-1,1)\}
$$
Thus, the subgraph of $R_n$ induced by $X$ is the disjoint union of two isolated vertices ($(n,3)$ and $(n+2k-1,2)$) and  $(n-2)$ cycles of length $4$ (of the form $((i,1),(i,2),(n+i,2),(n+i,1))$ for $i\in \{1,3,\ldots,2n-1\}\setminus\{2k-1\}$ or $((j,0),(j,3),(n+j,3),(n+j,0))$ for $j\in \{2,4,\ldots,2n-2\}$). In particular, a maximal stable set in this induced subgraph has size $2(n-2)+2=2n-2$ and thus $|M|=2n$. This concludes the proof.
\end{proof}

\begin{proposition}\label{prop:S_n}
Let $n\geq 2$ and let $S_n$ be the Cayley graph on  $\ZZ_{2n}\times \ZZ_4$ with connection set $S=\{(0,1),(0,3),(2i+1,0),(2i+1,1),(2i+1,3)\mid 0\leq i \leq n-1\}$. Then $S_n$ is a connected CIS graph of order $8n$ and valency $3n+2$ with $\alpha(S_n)=2n$ and $\omega(S_n)=4$.
\end{proposition}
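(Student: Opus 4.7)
The plan is to follow the same approach as the proof of Proposition~\ref{prop:R_n}. First, one verifies that $S$ is inverse-closed, has cardinality $3n+2$, and generates $\ZZ_{2n}\times \ZZ_4$ (since $(0,1)$ and $(1,0)$ both lie in $S$); therefore $S_n$ is a connected Cayley graph of order $8n$ and valency $3n+2$. Since $S_n$ is vertex-transitive, Theorem~\ref{thm:VT-CIS} reduces the CIS property to verifying that every maximal clique containing $(0,0)$ has size $4$ and every maximal stable set containing $(0,0)$ has size $2n$; given $4\cdot 2n = 8n$, the CIS property then follows.

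The key structural feature I would exploit is the fiber decomposition $\mathcal{B} = \{\{c\}\times \ZZ_4 : c\in \ZZ_{2n}\}$. A direct computation from the explicit form of $S$ yields three facts: each fiber induces a $4$-cycle; there are no edges between two distinct fibers whose indices have the same parity (because $S$ contains no element with first coordinate even and nonzero); and between fibers whose indices have opposite parities, $(c,a)$ and $(c',b)$ are adjacent if and only if $a - b \not\equiv 2 \pmod 4$. Consequently, every clique of $S_n$ meets each fiber in at most two vertices and lies in the union of at most two fibers (necessarily of opposite parity), giving $\omega(S_n) \le 4$; equality holds since $\{(0,0),(0,1),(1,0),(1,1)\}$ is easily checked to be a $4$-clique. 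To prove that every maximal clique has size exactly $4$, it suffices to show that every triangle extends to a $4$-clique. After translating so that the unique vertex in one of the two fibers becomes $(0,0)$, this reduces to a short case analysis: the two vertices lying in the odd fiber have second coordinates forming either $\{0,1\}$ or $\{0,3\}$, and in each case $(0,1)$, respectively $(0,3)$, completes the clique.

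For the stable sets, let $M$ be a maximal stable set with $(0,0) \in M$. Since $M \cap S = \emptyset$, every odd fiber contributes at most the vertex $(2i+1,2)$ to $M$. I would then split into two cases. If $(2i_0+1,2) \in M$ for some $i_0$, then using the fact that this vertex is adjacent to every $(2k,b)$ with $b \in \{1,2,3\}$, the set $M$ is forced to lie inside $T := \{(2k,0) : 0\le k\le n-1\} \cup \{(2i+1,2) : 0\le i\le n-1\}$; a direct verification shows that $T$ is itself a maximal stable set of size $2n$, so $M = T$. Otherwise $M$ lies entirely in the union of the even fibers; since distinct even fibers share no edges, maximality forces $M$ to meet each even fiber in a maximal stable set of the induced $4$-cycle, which has exactly two vertices, yielding $|M| = 2n$.

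The main obstacle is the bookkeeping in the stable-set case analysis, but once one exploits the fiber/parity structure established for the clique count and splits on whether $M$ meets some odd fiber, each sub-case reduces to a short direct check.
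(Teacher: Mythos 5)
Your proposal is correct and follows essentially the same approach as the paper's (omitted) proof, i.e.\ the strategy of Proposition~\ref{prop:R_n}: reduce via vertex-transitivity and Theorem~\ref{thm:VT-CIS} to checking the maximal cliques and maximal stable sets through $(0,0)$, and carry out that check using the fibers $\{c\}\times\ZZ_4$; your parity observations and the two-case split on whether $M$ meets an odd fiber are all accurate. One small point to patch in the clique argument: knowing that every triangle extends to a $4$-clique does not by itself show that every maximal clique has size $4$ --- you also need that every edge lies in a triangle, which is true but should be said (for an edge $\{(c,a),(c,a+1)\}$ inside a fiber, $(c+1,a)$ completes a triangle; for an edge $\{(c,a),(c',b)\}$ across fibers, one of $(c,a+1)$, $(c,a-1)$ is adjacent to $(c',b)$ because $b-a\in\{0,1,3\}$ excludes at most one of them).
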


The proof of Proposition~\ref{prop:S_n} is very similar to the proof of Proposition~\ref{prop:R_n} and is omitted. Note that $R_2\cong Q_4\cong \overline{S_2}$. Using Gordon Royle's table of vertex-transitive graphs of order at most $32$ \cite{Royleweb}, we obtain the following with the help of computer.

\begin{proposition}\label{prop:up-to-32}
Let $\mathcal{F}$ be the family containing the following graphs:
\begin{enumerate}
\item $K_n$, $n\geq 1$,
\item $L(K_{n,n})$, $n\geq 3$,
\item $Q_n$, $n\geq 4$,
\item $R_n$, $n\geq 3$,
\item $S_n$, $n\geq 3$,
\end{enumerate}
and let $\overline{\mathcal{F}}$ be the closure of $\mathcal{F}$ under the operations of taking complements and lexicographic products. Then, up to isomorphism,  every vertex-transitive CIS graph of order at most $32$ is in $\overline{\mathcal{F}}$.
\end{proposition}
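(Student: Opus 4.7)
The plan is a finite computer search based on Theorem~\ref{thm:VT-CIS}. For each graph $\Gamma$ in Gordon Royle's catalogue~\cite{Royleweb} of vertex-transitive graphs of order at most $32$, one computes $\omega(\Gamma)$ and $\alpha(\Gamma)$ by enumerating all maximal cliques and maximal stable sets with standard algorithms (available in Magma, SageMath, or via \texttt{nauty}), then checks that every maximal clique has size $\omega(\Gamma)$, every maximal stable set has size $\alpha(\Gamma)$, and $\omega(\Gamma)\alpha(\Gamma) = |V(\Gamma)|$. By Theorem~\ref{thm:VT-CIS} these three conditions characterize the vertex-transitive CIS graphs, so this step produces exactly the list of vertex-transitive CIS graphs of order at most $32$.

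To shrink the set of graphs we must compare against $\overline{\mathcal F}$, we reduce via Proposition~\ref{lemma:CIS-complements}(\ref{CIS:disconnected}) and Proposition~\ref{irreducibleQuotient}. A disconnected vertex-transitive CIS graph is a disjoint union of copies of a connected vertex-transitive CIS graph $\Gamma'$ (its components are all isomorphic by vertex-transitivity), hence equals $\overline{K_k}[\Gamma']$. A reducible vertex-transitive CIS graph is isomorphic to $X[\overline{K_m}]$, where $X$ is its irreducible quotient (itself vertex-transitive and CIS) and $m$ is the common size of the equivalence classes of the same-neighbourhood relation, forced to be equal because the relation is preserved by $\mathrm{Aut}(\Gamma)$. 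Both reductions stay inside $\overline{\mathcal F}$, so it suffices to verify the proposition for connected irreducible vertex-transitive CIS graphs.

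For each such $\Gamma$ of order at most $32$ produced by the search, one tests isomorphism (using a canonical-form tool such as \texttt{nauty}) against the connected irreducible members of $\overline{\mathcal F}$ of matching order: $K_1$, the line graphs $L(K_{n,n})$ for $3 \le n \le 5$, the graphs $Q_n$ for $4 \le n \le 8$, the graphs $R_n$ and $S_n$ for $n\in\{3,4\}$, together with all their complements (and identifying the coincidences $R_2\cong Q_4\cong \overline{S_2}$ already noted in the text). If every graph produced by the CIS search matches one of these, the proposition is established.

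The main, and essentially only, obstacle is computational bookkeeping: relying on the completeness of Royle's catalogue up to order $32$, ensuring that the maximal-clique and maximal-stable-set enumerations are exhaustive, and carrying out a nontrivial but finite number of isomorphism tests. No conceptual difficulty is expected, as each step is a routine finite computation well within reach of standard graph-theoretic software.
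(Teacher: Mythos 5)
Your overall strategy is the same as the paper's: the paper proves this proposition purely by a computer check against Royle's catalogue, using Theorem~\ref{thm:VT-CIS} as the CIS test, and your reductions via Proposition~\ref{lemma:CIS-complements} and Proposition~\ref{irreducibleQuotient} are sound (and your observation that the same-neighbourhood classes have equal size by vertex-transitivity is exactly Proposition~\ref{irreducibleQuotientVT}).

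There is, however, a concrete flaw in your final matching step. After reducing to connected irreducible vertex-transitive CIS graphs, you compare only against $K_1$, $L(K_{n,n})$, $Q_n$, $R_n$, $S_n$ and their complements. But the connected irreducible members of $\overline{\mathcal F}$ of order at most $32$ form a strictly larger set: first, you omit the complete graphs $K_n$ for $2\le n\le 32$ (which are connected, irreducible, vertex-transitive and CIS); second, and more seriously, the closure under lexicographic products contains connected \emph{irreducible} graphs that are not base members or complements of base members, because taking a lexicographic product with a complete second factor does not introduce repeated open neighbourhoods. Examples of order at most $32$ that your search will certainly produce include $C_4[K_2]$ (order $8$) and $K_{3,3}[K_2]$ (order $12$), both of which appear in Corollary~\ref{cor:VT-CIS-valency-7}, as well as $L(K_{3,3})[K_2]$ (order $18$) and similar products. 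Against your stated list these graphs would fail every isomorphism test, and the procedure as written would wrongly conclude that the proposition is false. The fix is easy: either additionally quotient by the ``same closed neighbourhood'' relation (which collapses $X[K_m]$ to $X$) before matching, or simply generate all members of $\overline{\mathcal F}$ of order at most $32$ (a finite list, obtained by closing the base family under complementation and lexicographic products while the order stays at most $32$) and test membership against that full list.
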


\section{Well-covered CIS graphs with clique number at most $3$}\label{sec:omega-3}

As we saw in Section~\ref{sec:prelim-CIS-well-covered}, when studying CIS graphs, we may restrict our attention to connected irreducible graphs. The main result of this section is Theorem~\ref{Omega=3}, a classification of connected irreducible well-covered CIS graphs with clique number at most $3$. First, we need a definition and a lemma.

\begin{definition}
Let $\Gamma$ be a graph and $v$ a non-isolated vertex in $\Gamma$. We denote by $\rho_\Gamma(v)$ the minimum of  $\{|S\cap N(v)|\mid S$ is a maximal stable set of $\Gamma$ not containing $v\}$ and by $\rho(\Gamma)$ the minimum of $\rho_\Gamma(v)$ as $v$ ranges over all non-isolated vertices of $\Gamma$.
\end{definition}

\begin{lemma}\label{lemma:intersection}
Let $\Gamma$ be a well-covered graph with no isolated vertex. Let $S$ be a maximal stable set in $\Gamma$, let $v\in V(\Gamma)\setminus S$ be such that $|S\cap N(v)|=\rho(\Gamma)$ and let  $X=\{v\}\cup (S\setminus N(v))$. Note that $X$ is a stable set. Let $S'$ be a maximal stable set containing $X$ and let $W=S'\setminus (S\cup\{v\})$.  Then $|S'\setminus S|=\rho(\Gamma)$, $|W|=\rho(\Gamma)-1$, $W\cap N[v]=\emptyset$ and every vertex of $W$ is  adjacent to every vertex of $S\cap N(v)$.
\end{lemma}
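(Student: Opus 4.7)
The easy claims come first. The inclusion $W\cap N[v]=\emptyset$ is immediate: $W\subseteq S'$, $v\in S'$, and $S'$ is stable, so no element of $W$ lies in $N(v)$; and $v\notin W$ by definition. For the cardinalities, I would first identify $S\cap S'$ exactly. Since $S\setminus N(v)\subseteq X\subseteq S'$ and $v\in S'$, no vertex of $S\cap N(v)$ can lie in $S'$, so $S\cap S'=S\setminus N(v)$ and hence $|S\cap S'|=|S|-\rho(\Gamma)$. Well-coveredness gives $|S'|=|S|=\alpha(\Gamma)$, so $|S'\setminus S|=\rho(\Gamma)$. Since $v\in S'\setminus S$, we get $|W|=|S'\setminus S|-1=\rho(\Gamma)-1$.

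The main content is the last assertion. I would argue by contradiction: suppose some $w\in W$ has a non-neighbor $u\in S\cap N(v)$. The plan is to construct a maximal stable set $T$ avoiding $v$ but with $|T\cap N(v)|<\rho(\Gamma)$, contradicting the definition of $\rho_\Gamma(v)$ (and hence of $\rho(\Gamma)$, which equals $\rho_\Gamma(v)$ since $v$ already achieves this minimum through $S$).

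To build $T$, set $Y=\{w,u\}\cup(S\setminus N(v))$. I would check this is stable: $S\setminus N(v)$ is stable as a subset of $S$; $u\in S$ is non-adjacent to every other element of $S$; $w$ is non-adjacent to $u$ by hypothesis and to every element of $S\setminus N(v)=S\cap S'$ because both sit inside the stable set $S'$. Then $|Y|=(\alpha-\rho)+2$. Extend $Y$ to a maximal stable set $T$; well-coveredness gives $|T|=\alpha$, hence $|T\setminus Y|=\rho-2$.

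Finally, I would count $|T\cap N(v)|$. Among the elements of $Y$, only $u$ lies in $N(v)$, so $Y\cap N(v)=\{u\}$, which gives $|T\cap N(v)|\le 1+|T\setminus Y|=\rho-1$. On the other hand, $v\notin T$ (since $u\in T\cap N(v)$), so $T$ is a maximal stable set not containing $v$, forcing $|T\cap N(v)|\ge\rho_\Gamma(v)=\rho(\Gamma)$. This is the required contradiction. The main obstacle is really just finding the right stable set to extend; once $Y$ is identified, the counting is mechanical. Note that the argument is vacuous when $\rho(\Gamma)\le 1$ (then $W=\emptyset$), so the case $\rho\ge 2$ is what is at stake, and even the boundary case $\rho=2$ falls out of the same count.
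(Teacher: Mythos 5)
Your proof is correct. The first half (identifying $S\cap S'=S\setminus N(v)$ and deducing the cardinalities from well-coveredness) is exactly the paper's argument. For the main assertion — that every $w\in W$ is adjacent to all of $S\cap N(v)$ — you take a genuinely different route. The paper applies the global minimality of $\rho(\Gamma)$ \emph{at the vertex $w$}: since $S$ is a maximal stable set not containing $w$, $w$ has at least $\rho(\Gamma)$ neighbors in $S$; but all of $w$'s neighbors in $S$ must lie in $S\setminus S'=S\cap N(v)$ (because $w$ and $S\cap S'$ sit together in the stable set $S'$), and that set has exactly $\rho(\Gamma)$ elements, so equality holds. That is a two-line direct count. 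You instead apply minimality \emph{at $v$}, arguing by contradiction: assuming $w$ misses some $u\in S\cap N(v)$, you build the stable set $\{w,u\}\cup(S\setminus N(v))$, extend it to a maximal stable set $T$ avoiding $v$, and use well-coveredness to force $|T\cap N(v)|\le\rho(\Gamma)-1$. Your counting is all correct (including the observation that the argument is only needed when $\rho(\Gamma)\ge 2$, since $W=\emptyset$ otherwise), so the proof stands; the trade-off is that your version requires constructing and sizing an auxiliary maximal stable set, whereas the paper's exploits the fact that $\rho(\Gamma)$ already constrains the vertex $w$ directly against the set $S$ you have in hand, with no new extension needed.
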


\begin{proof}
Note that $S\setminus S'=S\cap N(v)$ and, since every maximal stable set has cardinality $|S|$, $|S'\setminus S|=|S\setminus S'|=|S\cap N(v)|=\rho(\Gamma)$ and thus $|W|=\rho(\Gamma)-1$. Since $S'$ is a stable set, we have $W\cap N[v]=\emptyset$. Let $w\in W$. By definition of $\rho(\Gamma)$, $w$ has at least $\rho(\Gamma)$ neighbors in $S$, hence they must be exactly the elements of $S\setminus S'=S\cap N(v)$.
\end{proof}

\begin{theorem}\label{Omega=3}
Let $\Gamma$ be a connected irreducible well-covered CIS graph with $\omega(\Gamma)\leq 3$. Then $\Gamma$ is isomorphic to $K_1$, $K_2$, $K_3$ or $L(K_{3,3})$.
\end{theorem}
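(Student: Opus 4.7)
The plan is to reduce to $\omega(\Gamma)=3$ (the case $\omega(\Gamma)\leq 2$ being immediate from Corollary~\ref{Omega=2}), then show that either $\Gamma\cong K_3$ or every local graph $Y_v$ of $\Gamma$ is isomorphic to $2K_2$. In the latter case, Theorem~\ref{Local TwiceComple} yields $\Gamma\cong L(K_{n,n})$, and $\omega(\Gamma)=3$ forces $n=3$, giving $\Gamma\cong L(K_{3,3})$. Assuming $\omega(\Gamma)=3$, Lemma~\ref{coollemma} ensures that no two triangles share an edge, so for every non-isolated $v$, the local graph $Y_v$ has maximum degree~$1$ and is therefore a disjoint union of independent edges and isolated vertices.

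The first substantive step is to show that $\Gamma$ has no \emph{thin edge} (an edge contained in no triangle). Suppose $\{u,v\}$ is thin. If both endpoints have degree at least $2$, pick $u'\in N(u)\setminus\{v\}$ and $v'\in N(v)\setminus\{u\}$; thinness forces $u'\not\sim v$, $v'\not\sim u$, and $u'\neq v'$. If in addition $u'$ and $v'$ can be chosen non-adjacent, then the path on $u',u,v,v'$ is an induced $P_4$ and Observation~\ref{obs:bull} supplies a common neighbour of $u$ and $v$, contradicting thinness. Hence the bipartite subgraph between $N(u)\setminus\{v\}$ and $N(v)\setminus\{u\}$ is complete, and $\omega(\Gamma)=3$ forces at least one side to be independent (else a $K_4$ would arise); an edge in the other side would, via complete bipartiteness, create two triangles sharing that edge, contradicting Lemma~\ref{coollemma}. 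If both sides are independent, the induced subgraph on $\{u,v\}\cup N(u)\cup N(v)$ is bipartite, so $\omega(\Gamma)=3$ and connectedness produce an external vertex $w$ adjacent to some $u_1\in N(u)\setminus\{v\}$; the induced $P_4$ on $w,u_1,u,v$ together with Observation~\ref{obs:bull} again gives a contradiction. The remaining case $\deg(v)=1$ uses irreducibility to force $v$ to be the unique vertex with $N=\{u\}$ and to prevent any other neighbour of $u$ from having degree $1$; a recursion on $Y_u$ (reducing thin edges $\{u,u'\}$ to the previous case) then shows that $N(u)\setminus\{v\}$ forms a perfect matching in $Y_u$. CIS applied to the resulting triangles through $u$ forces every maximal stable set not containing $u$ to meet $N(u)\setminus\{v\}$, so that the bijection $S\mapsto(S\setminus\{v\})\cup\{u\}$ between maximal stable sets containing $v$ with $S\cap(N(u)\setminus\{v\})=\emptyset$ and maximal stable sets containing $u$ has empty domain, contradicting the fact that $\{u\}$ extends to a maximal stable set.

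With no thin edges, each $Y_v$ is a pure matching $m_vK_2$. I would then rule out $m_v=1$ (except when $\Gamma\cong K_3$) and $m_v\geq 3$. If some $m_v=1$ with $\Gamma\neq K_3$, write $N(v)=\{a,b\}$ with $a\sim b$ and fix a triangle $\{a,c,c'\}$ at $a$ distinct from $\{v,a,b\}$, which exists by $\Gamma\neq K_3$. The equalities $\alpha=1+|T'|=1+|T_a|$ provided by the maximum stable sets $\{v\}\cup T'$ and $\{a\}\cup T_a$ (with $T'$ maximum in $\Gamma[V\setminus\{v,a,b\}]$ and $T_a$ maximum in $\Gamma[V\setminus N[a]]$), combined with the observation that in the simplicial case $N(c)=\{a,c'\}$ the set $T_a\cup\{c\}$ is stable in $\Gamma[V\setminus\{v,a,b\}]$ of size $|T_a|+1=\alpha$, produce a stable set exceeding $\alpha$, contradicting well-coveredness of $\Gamma$. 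A short induction handles the non-simplicial case by propagating the argument along a further triangle through $c$, ultimately forcing $\Gamma\cong K_3$.

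If instead some $m_v\geq 3$, choose $v$ with $m_v=:m$ minimum and apply Lemma~\ref{lemma:intersection}: one obtains a set $W$ of $m-1$ non-neighbours of $v$, each adjacent to every vertex of a chosen transversal $\{a_1,\dots,a_m\}$ of the matching of $Y_v$. Lemma~\ref{coollemma} pins down, for each $w\in W$ and each $i$, a unique third vertex $y_i^w$ of the triangle through $\{w,a_i\}$, which necessarily lies outside $N[v]$ and is distinct from every $b_j$. A second application of Lemma~\ref{lemma:intersection} at $a_1$ identifies its exchange set with precisely $\{a_2,\dots,a_m\}$, and the resulting rigid adjacency pattern among $\{a_i,b_i,w,y_i^w\}$ combined with $\omega(\Gamma)=3$ yields a contradiction. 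This last step is the main obstacle: Lemma~\ref{lemma:intersection} supplies a rich local structure, but extracting from it a concrete $\omega=3$ violation requires careful combinatorial bookkeeping of the many forced adjacencies and non-adjacencies. With both cases dispatched, every $Y_v$ is $2K_2$, and Theorem~\ref{Local TwiceComple} completes the proof.
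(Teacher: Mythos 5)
Your overall skeleton matches the paper's: reduce to $\omega(\Gamma)=3$, show every edge lies in a (necessarily unique, by Lemma~\ref{coollemma}) triangle so that local graphs are perfect matchings, dispose of the cases ``one triangle per vertex'' and ``at least three triangles per vertex'', and finish with Theorem~\ref{Local TwiceComple} in the remaining case. However, the proposal has a genuine gap at exactly the hardest point. In the case $m_v\ge 3$ you describe a plan (one application of Lemma~\ref{lemma:intersection} at $v$, a second at $a_1$, then ``careful combinatorial bookkeeping'' of forced adjacencies to violate $\omega=3$) and then explicitly concede that extracting the contradiction is ``the main obstacle.'' That step is not carried out, and a single exchange set $W$ of size $m-1$ is not enough to force anything: it only shows $a_1$ has at least $m+1$ neighbours, which is consistent with everything. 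The paper's resolution is quantitative rather than structural: it first proves that $\Gamma$ is $2r$-regular (by showing, via Lemma~\ref{lemma:intersection} and CIS, that every neighbour of a minimal-valency vertex has the same valency), and then applies Lemma~\ref{lemma:intersection} \emph{three} times, with the transversals $\{a_1,\dots,a_r\}$, $\{a_1,\dots,a_{r-1},b_r\}$ and $\{a_1,\dots,a_{r-2},b_{r-1},b_r\}$, obtaining three pairwise disjoint $(r-1)$-sets $W_1,W_2,W_3$ of neighbours of $a_1$ (disjointness coming from the uniqueness of triangles through $\{a_{r-1},b_{r-1}\}$ and $\{a_r,b_r\}$). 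Together with $v$ and $b_1$ this gives $\deg(a_1)\ge 3r-1>2r$ when $r\ge 3$. You would need either to reproduce this counting (including the regularity step, which your proof never establishes) or to actually complete the adjacency bookkeeping you defer.

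There is a secondary gap in your $m_v=1$ case: you fully treat only the sub-case where the adjacent triangle $\{a,c,c'\}$ has $c$ simplicial, and wave at ``a short induction'' for the non-simplicial case without saying what quantity decreases. The paper sidesteps this entirely: once regularity is known, $r=1$ means $\Gamma$ is connected and $2$-regular with every edge in a triangle, which immediately forces $\Gamma\cong K_3$. Your ``no thin edge'' argument, while longer and more case-ridden than the paper's (which handles it with Observation~\ref{obs:bull} applied to a path leaving a triangle, plus the degree-one stable-set swap), does appear to be essentially sound. In summary: the architecture is right, but the two delicate cases are not closed, and the missing regularity lemma is the tool the paper uses to close both.
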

\begin{proof}
If $\omega(\Gamma)\leq 2$ then, by Corollary~\ref{Omega=2}, $\Gamma$ is isomorphic to $K_1$ or $K_2$. We thus assume that $\omega(\Gamma)=3$.

We first show that every edge of $\Gamma$ is contained in a triangle. Since $\omega(\Gamma)=3$ and $\Gamma$ is connected, it suffices to show that every edge that intersects a triangle is itself contained in a triangle. Let $T:=\{v,w,z\}$ be a triangle and let $x$ be a neighbor of $v$ not contained in $T$. Suppose that $\{x,v\}$ is not contained in a triangle. In particular, $x$ is adjacent to neither $w$ or $z$. Suppose that $x$ has a neighbor $y$ different than $v$. Since $\{x,v\}$ is not contained in a triangle, $y$ is not adjacent to $v$ and it follows by Observation~\ref{obs:bull} that neither $(y,x,v,w)$ nor $(y,x,v,z)$ is an induced path of length three. In particular, $y$ is adjacent to both $w$ and $z$. This implies that the edge $\{w,z\}$ is contained in two distinct triangles, contradicting Lemma~\ref{coollemma}. We may thus assume that $v$ is the unique neighbor of $x$. Let $S$ be a maximal stable set of $\Gamma$ containing $v$ and let $S' = (S\cup \{x\})\setminus \{v\}$. Clearly, $S'$ is a stable set of $\Gamma$ with $|S'| = |S|$ and, since $\Gamma$ is well-covered, it is a maximal stable set. On the other hand, $S'$ does not intersect the maximal clique $T$, contradicting the fact that $\Gamma$ is CIS. This concludes the proof that that every edge of $\Gamma$ is contained in a triangle and, in fact, in a unique triangle by Lemma~\ref{coollemma}. In particular, every vertex has even valency.

Let $v$ be a vertex of minimal valency in $\Gamma$ and let $r$ be the number of triangles containing $v$. Since every edge is contained in a unique triangle, $v$ has valency $2r$. Since $\Gamma$ is CIS, it is clear that $\rho_\Gamma(v)=r=\rho(\Gamma)$.

Let $u$ be a neighbor of $v$ and let $S$ be a maximal stable set of $\Gamma$ containing $u$ (and thus not $v$). Note that $|S\cap N(v)|=r$ and thus Lemma \ref{lemma:intersection} implies that there exists a stable set $S'$  containing $\{v\}\cup (S\setminus N(v))$ with $|S'\setminus S|=r$. Let $T$ be a triangle containing $u$. Since $\Gamma$ is CIS, $T\cap S'\neq\emptyset$. As $u\in S\setminus S'$, it follows that $(T\setminus\{u\})\cap (S'\setminus S)\neq \emptyset$. In particular, the number of triangles containing $u$ is at most $|S'\setminus S|=r$ and thus $u$ has valency $2r$. In particular, every neighbor of a vertex with minimal valency is also of minimal valency. Connectedness of $\Gamma$ implies that $\Gamma$ is $2r$-regular.

Suppose that $r\geq 3$ and let $N(v)=\{a_1,\ldots,a_r,b_1,\ldots,b_r\}$ such that $a_i$ is adjacent with $b_i$. Let $S$ be a maximal stable set in $\Gamma$ containing $\{a_1,\ldots,a_r\}$. By Lemma \ref{lemma:intersection}, there exists a set $W_1$ of $r-1$ vertices such that $W_1\cap N[v]=\emptyset$ but every vertex of $W_1$ is adjacent  to every vertex of $\{a_1,\ldots,a_r\}$. By the same argument, there exists a set $W_2$ of $r-1$ vertices such that $W_2\cap N[v]=\emptyset$ but every vertex of $W_2$ is adjacent to every vertex of $\{a_1,\ldots,a_{r-1},b_r\}$, and a set $W_3$ of $r-1$ vertices such that $W_3\cap N[v]=\emptyset$ but every vertex of $W_3$ is adjacent to every vertex of $\{a_1,\ldots, a_{r-2},b_{r-1},b_r\}$.

Note that $W_1\cap W_2=\emptyset$ otherwise the edge $\{a_r,b_r\}$ would be contained in two distinct triangles. Note also that every element of $(W_1\cup W_2)\cap W_3$ forms a triangle with $\{a_{r-1},b_{r-1}\}$. Since $v\notin(W_1\cup W_2)\cap W_3$ and every edge is in a unique triangle, it follows that $(W_1\cup W_2)\cap W_3=\emptyset$. This implies that $|W_1\cup W_2 \cup W_3|=3(r-1)$. Finally, note that $W_1,W_2,W_3,\{b_1,v\}\subseteq N(a_1)$, but neither $b_1$ nor $v$ are contained in $W_1\cup W_2 \cup W_3$. It follows that $a_1$ has valency at least $3(r-1)+2=3r-1$. This implies $k=2r\geq 3r-1$, contradicting the fact that $r\geq 3$.

We may thus assume that $r\leq 2$. If $r=1$ then clearly $\Gamma\cong K_3$. If $r=2$ then it follows from Theorem~\ref{Local TwiceComple} that $\Gamma\cong L(K_{3,3})$.
\end{proof}

Together with Theorem~\ref{thm:VT-CIS}, Theorem~\ref{Omega=3} immediately implies that a connected irreducible vertex-transitive CIS graph with clique number at most $3$ is isomorphic to $K_1$, $K_2$, $K_3$ or $L(K_{3,3})$. In contrast, if the clique number is $4$ then there are many infinite families of examples, for example $Q_n$, $R_n$ and $S_n$ (see Section~\ref{sec:examples}).

\section{Vertex-transitive CIS graphs of valency at most $7$}\label{sec:valency}

In this section, we classify vertex-transitive CIS graphs of valency at most $7$. We will need the following preliminary results.

\begin{lemma}\label{UniversalVertex}
Let $\Gamma$ be a $k$-regular graph such that every local graph of $\Gamma$ has exactly $n$ universal vertices. Then $\Gamma=Z[K_{n+1}]$ for some graph $Z$  of valency $\frac{k-n}{n+1}$.
\end{lemma}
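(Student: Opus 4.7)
The plan is to identify the ``universal in the local graph at $u$'' condition with having the same closed neighborhood as $u$, use that to reveal an equivalence relation whose classes are cliques of size $n+1$, and then check that the quotient graph gives the required lexicographic decomposition.

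First I would establish the key equivalence: for distinct vertices $u,v$ of the $k$-regular graph $\Gamma$, the vertex $v$ is universal in the local graph at $u$ if and only if $N[u]=N[v]$. Starting from universality, $v$ is adjacent to $u$ and to every vertex of $N(u)\setminus\{v\}$, which gives $N[u]\subseteq N[v]$; then $k$-regularity forces equality of these sets of common size $k+1$. Conversely, if $N[u]=N[v]$ and $u\ne v$, then $u\in N[v]$ yields $uv\in E(\Gamma)$, and for each $w\in N(u)\setminus\{v\}$ we have $w\in N[v]$ with $w\ne v$, so $vw\in E(\Gamma)$, i.e.\ $v$ is universal in the local graph at $u$.

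Next I would set $u\equiv v$ iff $N[u]=N[v]$, an equivalence relation on $V(\Gamma)$. By the hypothesis and the previous step, every equivalence class has size exactly $n+1$, and any two distinct vertices in the same class are adjacent, so each class induces a copy of $K_{n+1}$. For distinct classes $A$ and $B$, I would check that if some $a\in A$ is adjacent to some $b\in B$ then every $a'\in A$ and $b'\in B$ are adjacent in $\Gamma$: indeed $b\in N[a]=N[a']$ with $b\ne a'$ gives $a'b\in E(\Gamma)$, and then $a'\in N[b]=N[b']$ with $a'\ne b'$ gives $a'b'\in E(\Gamma)$. Hence any two distinct classes are either completely joined or completely non-adjacent.

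Let $Z$ be the graph on equivalence classes with $A$ adjacent to $B$ (for $A\ne B$) iff some/every vertex of $A$ is adjacent in $\Gamma$ to some/every vertex of $B$. The two previous observations match the definition of the lexicographic product, giving $\Gamma\cong Z[K_{n+1}]$. Finally, each vertex $u$ has exactly $n$ neighbors inside its own class and $(n+1)\,d_Z([u]_\equiv)$ neighbors outside, so $k=n+(n+1)\,d_Z([u]_\equiv)$, yielding $d_Z([u]_\equiv)=(k-n)/(n+1)$ and in particular showing $Z$ is regular of that valency. There is no real obstacle in this argument; the only point that uses anything nontrivial is the equivalence in the first step, which relies critically on $k$-regularity to upgrade the inclusion $N[u]\subseteq N[v]$ to an equality.
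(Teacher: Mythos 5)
Your proof is correct and follows the same route as the paper: the paper's (very terse) proof also introduces the equivalence relation ``having the same closed neighborhood'' and takes $Z$ to be the quotient, leaving the verification that $\Gamma=Z[K_{n+1}]$ as ``easy to see.'' You have simply supplied the details, including the one genuinely load-bearing observation that $k$-regularity upgrades $N[u]\subseteq N[v]$ to equality.
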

\begin{proof}
Let $R$ be the equivalence relation ``having the same closed neighborhood'' on $V(\Gamma)$ and let $Z$ be the quotient graph of $\Gamma$ with respect to $R$. It is easy to see that $\Gamma=Z[K_{n+1}]$.
\end{proof}

\begin{lemma}\label{lemma:new}
Let $\Gamma$ be a connected, $k$-regular, well-covered, co-well-covered graph. Then either $\omega(\Gamma)\leq \frac{2}{3}(k+1)$ or $\Gamma$ is a complete graph.
\end{lemma}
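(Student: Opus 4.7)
The proof will proceed by contradiction: assume $\omega := \omega(\Gamma) > \frac{2}{3}(k+1)$ and $\Gamma$ is not complete. Since $\Gamma$ is $k$-regular and connected, a $(k+1)$-clique in $\Gamma$ would force $\Gamma \cong K_{k+1}$, so $\omega \leq k$. I will fix a vertex $v$ and a maximum clique $C \ni v$, pick $u \in N(v) \setminus C$ (possible since $\omega - 1 < k$), and use co-well-coveredness to extend $\{v,u\}$ to a maximum clique $C' \neq C$. Setting $D = C \cap C'$, $A = C \setminus C'$, $B = C' \setminus C$, we have $|A| = |B| = \omega - |D|$ and, since $C \cup C' \subseteq N[v]$, the inequality $|D| \geq 2\omega - k - 1 > \frac{k+1}{3}$.

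The first substantive claim is that some $a \in A$ has a neighbor $p$ outside $N[v]$. Indeed, if every $a \in A$ satisfied $N(a) \subseteq N[v]$, then $|N(a)| = k = |N[v] \setminus \{a\}|$ would force $N(a) = N[v] \setminus \{a\}$, so $a$ would be adjacent to every element of $B$, producing the clique $D \cup \{a\} \cup B$ of size $\omega + 1$ and contradicting $\omega(\Gamma) = \omega$. Fix such $a$ and $p$, and let $K$ be a maximum clique containing $\{a,p\}$. Applied at $a$, the same intersection estimate (using $K \cup C \subseteq N[a]$) gives $|K \cap C| \geq 2\omega - k - 1$. The contradiction will then follow once $K \cap C \subseteq A$ is established: combined with $|K \cap C| \leq |A| = \omega - |D| \leq k - \omega + 1$, this yields $2\omega - k - 1 \leq k - \omega + 1$, i.e., $\omega \leq \frac{2}{3}(k+1)$, contradicting the hypothesis.

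To prove $K \cap C \subseteq A$, note $v \in D \cap C$ but $v \notin K$ (since $v \not\sim p$), so it suffices to show $d \not\sim p$ for every $d \in D \setminus \{v\}$. This is immediate in the minimal configuration $|D| = 2\omega - k - 1$, which is equivalent to $C \cup C' = N[v]$: each $d \in D \setminus \{v\}$ then satisfies $N[d] = C \cup C' \subseteq N[v]$, so $p \notin N[v]$ forces $p \notin N[d]$. The main technical obstacle is therefore to ensure we can always reduce to this minimal case, i.e., to choose $v, C, u, C'$ so that the local graph $\Gamma[N(v)]$ contains two maximum cliques whose union is $N(v)$. This is automatic whenever $N(v) \setminus (C \setminus \{v\})$ happens to be a clique in $\Gamma$ extending to a maximum clique of $\Gamma[N(v)]$. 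In the remaining cases I expect to close the argument either by replacing the base vertex $v$ (exploiting connectedness and well-coveredness to migrate to a suitable configuration) or by choosing $p$ among the outside neighbors of $a$ so as to avoid $\bigcup_{d \in D \setminus \{v\}} N(d)$, via a counting argument on outside-edges that is made tight by the bound $\omega > \frac{2}{3}(k+1)$.
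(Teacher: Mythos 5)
There is a genuine gap, and you flag it yourself: the entire argument hinges on showing $K\cap C\subseteq A$, i.e.\ $K\cap D=\emptyset$, but you only establish this in the special case $|C\cup C'|=k+1$ (equivalently $N[v]=C\cup C'$, which forces $N[d]=N[v]$ for all $d\in D$). For the general case you offer two strategies --- migrating the base vertex $v$, or choosing $p\in N(a)\setminus N[v]$ outside $\bigcup_{d\in D\setminus\{v\}}N(d)$ by counting --- but neither is carried out, and neither is obviously workable. In particular, the counting route looks doubtful: $a$ has at most $k+1-\omega<\frac13(k+1)$ neighbours outside $N[v]$, while each of the (up to $\omega-1$) vertices $d\in D\setminus\{v\}$ may have up to $k+1-|C\cup C'|$ neighbours outside $C\cup C'$, so there is no evident reason a ``free'' $p$ exists; and even a $p$ adjacent to some $d\in D$ need not put $d$ into $K$, so the sufficient condition you aim for is stronger than what is needed, which makes the plan harder, not easier, to complete. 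A direct inclusion--exclusion on $K\cup C\cup C'\subseteq N[d]$ for $d\in K\cap D$ also fails to yield a contradiction, so the missing step is not a routine patch. Everything up to that point (the bound $|D|\ge 2\omega-k-1$, the existence of $p\notin N[v]$ adjacent to some $a\in A$, and the final arithmetic $2\omega-k-1\le k+1-\omega$) is correct.

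For comparison: the paper sidesteps this entirely by forming the graph $\Gamma_{\mathcal{Q}}$ of maximal cliques (connected because $\Gamma$ is connected and co-well-covered) and invoking a Hajnal/Kostochka-type lemma of Cranston and Rabern, which says that when $\omega>\frac23(k+1)$ a connected component of $\Gamma_{\mathcal{Q}}$ has a common vertex; that vertex is universal by co-well-coveredness, and well-coveredness then forces $\Gamma$ to be complete. The difficulty you are stuck on is exactly the content of that cited lemma: pairwise intersections of maximum cliques have size greater than $\frac13(k+1)>\frac{\omega}{2}$... more precisely $2(2\omega-k-1)>\omega$, so intersection of maximum cliques is ``transitive,'' and a Helly-type induction over the connected clique graph produces the common vertex. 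If you want a self-contained proof, that induction on the whole family of maximal cliques is the argument to write out; trying to force the conclusion from a single triple $C, C', K$ around one vertex is where your approach stalls.
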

\begin{proof}
Suppose that $\omega(\Gamma)> \frac{2}{3}(k+1)$. Let $\Gamma_{\mathcal{Q}}$ be the graph of maximal cliques of $\Gamma$, that is, the graph with maximal cliques of $\Gamma$ as vertices, and two such cliques adjacent in $\Gamma_{\mathcal{Q}}$ if they intersect in $\Gamma$. Since $\Gamma$ is co-well-covered and connected, it easily follows that $\Gamma_{\mathcal{Q}}$ is connected. By~\cite[Lemma~2.2]{CranstonRarXiv}, this implies that $\Gamma$ has a vertex that is contained in every maximal clique. Since $\Gamma$ is co-well-covered, this vertex is a universal vertex. In particular, there exists a maximal stable set of cardinality one. As $\Gamma$ is well-covered, every maximal stable set has cardinality one and thus $\Gamma$ is a complete graph.
\end{proof}

The graphs in Figure~\ref{fig:localgraphs} will play an important role in the proof of Theorem~\ref{thm:valency-7}.

\begin{figure}[h]
        \centering
        \begin{subfigure}[b]{0.45\textwidth}
        \centering
                \includegraphics[height=0.8in]{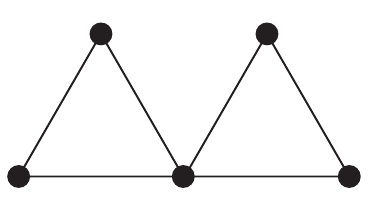}
                \caption{The graph $T_2$}
                \label{fig:T_2}
        \end{subfigure}
		\begin{subfigure}[b]{0.45\textwidth}
		\centering
                \includegraphics[height=0.8in]{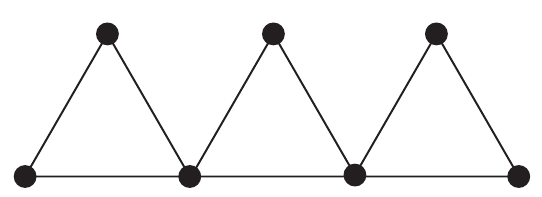}
                \caption{The graph $T_3$}
                \label{fig:T_3}
        \end{subfigure}

		\begin{subfigure}[b]{0.45\textwidth}
		\centering
                \includegraphics[height=1.2in]{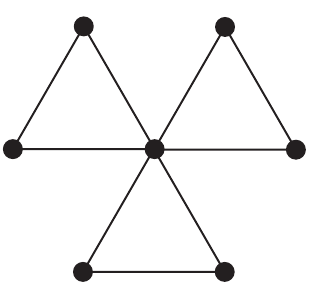}
                \caption{The graph $T_3'$}
                \label{fig:T_3'}
        \end{subfigure}
        \begin{subfigure}[b]{0.45\textwidth}
        \centering
                \includegraphics[height=1.1in]{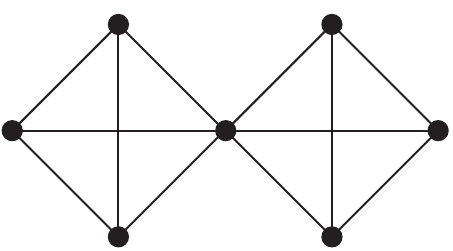}
                \caption{The graph $U_2$}
                \label{fig:U_2}
        \end{subfigure}
\caption{Local graphs for Theorem~\ref{thm:valency-7}}\label{fig:localgraphs}
\end{figure}

We leave the following lemma as an exercise for the reader.

\begin{lemma}\label{lemma:local}
Let $Y$ be a co-well-covered graph of order $k$ with $\omega(Y)=t$ and with the property that no two $t$-cliques intersect in a $(t-1)$-clique.
\begin{enumerate}
\item If $(k,t)=(5,3)$ then $Y\cong T_2$.
\item If $(k,t)=(6,3)$ then $Y\cong 2K_3$.
\item If $(k,t)=(7,3)$ then $Y\cong T_3$ or $T_3'$.
\item If $(k,t)=(7,4)$ then $Y\cong U_2$.
\end{enumerate}
\end{lemma}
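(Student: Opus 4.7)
The proof is a short case-by-case analysis. The key local-structural observation is: since $Y$ is co-well-covered with $\omega(Y)=t$, every clique of $Y$ extends to a $t$-clique, so $Y$ has no isolated vertex and every edge of $Y$ lies in a $t$-clique; combined with the hypothesis that no two $t$-cliques share a $(t-1)$-clique, the local graph $Y_v$ (induced on $N_Y(v)$) at each vertex $v$ is constrained as follows. For $t = 3$: two edges of $Y_v$ (corresponding to two triangles of $Y$ through $v$) cannot share a vertex, and every vertex of $Y_v$ lies in some edge; hence $Y_v$ is a perfect matching, $\deg_Y(v)$ is even, and $Y$ is $K_4$-free. For $t = 4$: two triangles of $Y_v$ (corresponding to two $K_4$'s of $Y$ through $v$) share at most one vertex and hence are edge-disjoint, while every edge of $Y_v$ lies in such a triangle; hence $Y_v$ is a diamond-free graph whose edges are covered by pairwise edge-disjoint triangles, and $Y$ is $K_5$-free.

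For $(k,t)=(5,3)$, writing $n_d$ for the number of vertices of degree $d$, the double count $\sum_v \deg_Y(v) = 6\tau$ (where $\tau$ is the number of triangles of $Y$), together with positive even degrees on $5$ vertices, leaves only $(n_2,n_4)\in\{(4,1),(1,4)\}$. The latter forces a $K_4$ on the four degree-$4$ vertices (each is adjacent to all others), contradicting $K_4$-freeness, and the former gives the bowtie $T_2$ (two triangles sharing the unique degree-$4$ vertex). For $(k,t)=(6,3)$, the feasible sequences are $(n_2,n_4)\in\{(0,6),(3,3),(6,0)\}$. The case $(0,6)$ forces $Y$ to be the unique $4$-regular graph on $6$ vertices, $K_{2,2,2}$, whose triangles share edges. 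For $(3,3)$, three pairwise edge-disjoint triangles on six vertices must pairwise meet in distinct single vertices, and these three shared vertices are then pairwise adjacent, forming a ``ghost'' triangle sharing an edge with each of the original three, a contradiction. So only $(6,0)$ remains, giving $Y\cong 2K_3$.

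For $(k,t)=(7,3)$, if $Y$ has a vertex $v$ of degree $6$ then $Y_v$ is a perfect matching on the remaining six vertices and $Y$ is the friendship graph $F_3$; otherwise all degrees lie in $\{2,4\}$, and the divisibility of $|E(Y)|$ by $3$ forces $(n_2,n_4)=(5,2)$. A short analysis of the two degree-$4$ vertices $u$ and $v$ shows they must be adjacent, share a unique common neighbor of degree $2$, and each lie in one further triangle with two private degree-$2$ vertices, giving a unique ``chain of three triangles''. The two graphs $F_3$ and this chain are $T_3$ and $T_3'$. For $(k,t)=(7,4)$, an inspection of small graphs shows that the only possible $Y_v$ on $\deg_Y(v)\le 6$ vertices are $K_3$, the bowtie, and $2K_3$ (for $\deg_Y(v)\in\{3,5,6\}$; the cases $\deg_Y(v)\in\{1,2,4\}$ violate either the triangle-cover or the edge-disjoint condition). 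Parity of $\sum_v\deg_Y(v)$ on $7$ vertices forces at least one vertex of degree $6$; and two degree-$6$ vertices $v_1,v_2$ would make $v_2$ have degree $5$ in $Y_{v_1}\cong 2K_3$, impossible. Thus the unique degree-$6$ vertex $v$ is adjacent to all others with $Y_v\cong 2K_3$, yielding $Y\cong K_1\vee 2K_3 = U_2$.

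The main obstacle is the sub-case $(n_2,n_4)=(5,2)$ in $(k,t)=(7,3)$ (together with ruling out $(n_2,n_4)=(2,5)$). The heart of the argument is that if $v$ has degree $4$ with local matching $\{ab,cd\}$ on its four neighbors, then the local matching condition at any other degree-$4$ vertex $w$ forces the two neighbors of $w$ outside $N(v)\cup\{v\}$ to be precisely the two non-neighbors of $v$; so a third degree-$4$ vertex would re-use the same pair of non-neighbors and create two triangles sharing an edge. Enumerating the remaining sub-cases then produces the chain of three triangles as the unique realization of $(5,2)$ and simultaneously shows $(2,5)$ is infeasible.
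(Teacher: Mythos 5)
The paper offers no proof of this lemma---it is explicitly ``left as an exercise for the reader''---so there is nothing to compare against; your argument is a correct and essentially complete solution of that exercise. Your key reductions (for $t=3$ the link of each vertex is a perfect matching, so degrees are even and edges partition into triangles; for $t=4$ the link is an edge-disjoint, ghost-triangle-free union of triangles, forcing degrees in $\{3,5,6\}$) are exactly the right tools, and the degree-sequence case analysis checks out in every case. The only spot worth tightening is the ``heart of the argument'' for $(k,t)=(7,3)$: the claim that a degree-$4$ vertex $w$ has the two non-neighbors of $v$ as its remaining neighbors is proved (and true) for $w\in N(v)$ --- since $w$ cannot be adjacent to either vertex of the other triangle at $v$ without creating two triangles on the edge $vw$ --- but a degree-$4$ vertex outside $N[v]$ needs a separate one-line dismissal (its would-be neighbors in $N(v)$ already have their two neighbors accounted for); with that remark both subcases $(n_2,n_4)=(5,2)$ and $(2,5)$ are settled as you describe.
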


\begin{theorem}\label{thm:valency-7}
Let $\Gamma$ be a connected, $k$-regular, irreducible, well-covered, and co-well-covered CIS graph. If $k\leq 7$ then $\Gamma$ is either a complete graph or isomorphic to one of $L(K_{3,3})$, $L(K_{4,4})$, $C_4[K_2]$, $K_{3,3}[K_2]$ or $Q_n$ for some $n\geq 4$.
\end{theorem}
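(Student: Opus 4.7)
The strategy is to reduce the classification to an analysis of the local graph at a vertex and to dispatch each case either by the universal-vertex reduction (Lemma~\ref{UniversalVertex}) or by a direct geometric argument.  Assume $\Gamma$ is not complete; then Lemma~\ref{lemma:new} gives $\omega(\Gamma)\leq \frac{2}{3}(k+1)\leq 16/3$, so $\omega(\Gamma)\in\{2,3,4,5\}$.  Theorem~\ref{Omega=3} handles $\omega(\Gamma)\leq 3$, yielding $\Gamma\in\{K_1,K_2,K_3,L(K_{3,3})\}$, so it remains to treat $\omega(\Gamma)\in\{4,5\}$.  For any non-isolated vertex $v$, the local graph $Y$ at $v$ is, by Corollary~\ref{corollary:localCIS}, a co-well-covered graph on $k$ vertices with $\omega(Y)=\omega(\Gamma)-1$ and with no two $\omega(Y)$-cliques sharing an $(\omega(Y)-1)$-clique.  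Lemma~\ref{lemma:local} identifies $Y$ in the four pairs $(k,\omega(\Gamma))\in\{(5,4),(6,4),(7,4),(7,5)\}$, while the small pairs $(4,4)$, $(5,5)$, $(6,5)$ are ruled out by elementary enumeration; for instance, in $(6,5)$, $Y$ is forced to be two $K_4$'s sharing an edge, which has two universal vertices, so Lemma~\ref{UniversalVertex} would require $\Gamma\cong Z[K_3]$ with $Z$ of non-integer valency $4/3$.

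Three of the four local graphs produced by Lemma~\ref{lemma:local} admit a unique universal vertex.  When $Y\in\{T_2,T_3,U_2\}$, Lemma~\ref{UniversalVertex} gives $\Gamma\cong Z[K_2]$ with $Z$ of valency $(k-1)/2$.  By Proposition~\ref{lemma:CIS-complements} the graph $Z$ is itself CIS, and since $\omega(Z[K_2])=2\omega(Z)$ we need $\omega(Z)=\omega(\Gamma)/2$, which is not an integer when $\omega(\Gamma)=5$; hence $Y=U_2$ is impossible.  For $Y=T_2$, $Z$ is CIS, $2$-regular with $\omega(Z)=2$, so Corollary~\ref{YOYO} together with $2$-regularity forces $Z\cong C_4$ and therefore $\Gamma\cong C_4[K_2]$.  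For $Y=T_3$, analogous reasoning gives $Z\cong K_{3,3}$ and $\Gamma\cong K_{3,3}[K_2]$.  The remaining case $Y\cong 2K_3$ is immediate from Theorem~\ref{Local TwiceComple}: it gives $\Gamma\cong L(K_{n,n})$, and the valency condition $2(n-1)=6$ forces $n=4$, so $\Gamma\cong L(K_{4,4})$.

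The only genuinely difficult case is $(k,\omega(\Gamma))=(7,4)$ with $Y\cong T_3'$, where we must show $\Gamma\cong Q_n$ for some $n\geq 4$.  Call an edge of $\Gamma$ \emph{red} if it lies in two $4$-cliques.  Then the two red edges at each vertex $v$ are precisely those joining $v$ to the two ``hubs'' of its $T_3'$ (the two vertices lying in two triangles of $T_3'$), and together with the unique leaf of the central triangle they form a $4$-clique, which I denote $K(v)$.  Using Lemma~\ref{coollemma} (no two $4$-cliques share a triangle) together with a careful local analysis tracing how the $T_3'$ structure at each vertex in $K(v)$ is forced to fit with that at $v$, I would verify that $K(v)=K(u)$ for every $u\in K(v)$.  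Hence the $4$-cliques $K(v)$ partition $V(\Gamma)$ into $n=|V(\Gamma)|/4$ blocks.  The four out-of-block neighbours of $v$ split naturally into the two pendant triangles of $T_3'$, with each pendant pair lying in a single block (by the same red-edge/central-$K_4$ analysis applied to the pendant leaves), and the two resulting blocks being distinct, so the quotient graph on the blocks is connected and $2$-regular, hence a cycle $C_n$.

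To conclude $\Gamma\cong Q_n$, one must reconstruct the $PX$-like cross-block structure.  Within each block the red edges form a Hamilton $4$-cycle of the induced $K_4$, and between consecutive blocks each vertex is joined to exactly two vertices on the other side that themselves form a red edge of that block.  Choosing coordinates in $\mathbb{Z}_2\times\mathbb{Z}_2$ on each block compatibly with the red $4$-cycle and with the forward pairing yields an explicit labelling of $V(\Gamma)$ as $\mathbb{Z}_n\times\mathbb{Z}_2\times\mathbb{Z}_2$ realising the adjacencies of $Q_n$.  The principal obstacle is verifying that this coordinate assignment is globally consistent as one goes around the cycle of blocks: the rigidity of $T_3'$ forces the ``middle coordinate'' of the forward-paired pair at $v$ to match that of the backward-paired pair, reproducing the defining relation of $PX(n)$.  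Finally, the lower bound $n\geq 4$ follows because the three triangles of $T_3'$ at any vertex involve three pairwise distinct blocks, and the small cases can be eliminated by a direct inspection showing that the intended coordinate relations collapse.
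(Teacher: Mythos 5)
Your overall strategy coincides with the paper's: reduce to $(k,\omega(\Gamma))\in\{(5,4),(6,4),(7,4),(7,5)\}$ via Lemma~\ref{lemma:new} and Theorem~\ref{Omega=3} (so your separate enumeration for the pairs $(4,4)$, $(5,5)$, $(6,5)$ is unnecessary -- Lemma~\ref{lemma:new} already forces completeness there), identify the local graph through Corollary~\ref{corollary:localCIS} and Lemma~\ref{lemma:local}, dispatch the local graphs having a universal vertex by Lemma~\ref{UniversalVertex} together with Proposition~\ref{lemma:CIS-complements} and Corollary~\ref{YOYO} (yielding $C_4[K_2]$, $K_{3,3}[K_2]$, and the parity contradiction for $\omega(\Gamma)=5$), use Theorem~\ref{Local TwiceComple} for the local graph $2K_3$, and recover $Q_n$ from a red-edge/block analysis in the remaining case. (You have interchanged the names $T_3$ and $T_3'$ -- in the paper it is $T_3'$ that has the universal vertex -- but since the figures were not available to you this relabelling is immaterial.)

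The one genuine gap is in the case $(k,\omega(\Gamma))=(7,4)$, where Lemma~\ref{lemma:local} allows two non-isomorphic local graphs. You tacitly assume that all vertices of $\Gamma$ carry the same type of local graph, but the hypotheses give only regularity, not vertex-transitivity, so this is not automatic, and both of your subsequent arguments require it: Lemma~\ref{UniversalVertex} needs \emph{every} local graph to have exactly one universal vertex, and your block construction $K(v)$ needs every local graph to be of the two-hub type. The paper devotes a separate, nontrivial claim to proving that adjacent vertices have isomorphic local graphs (using the fact that two disjoint red edges of a $4$-clique force a complementary $4$-clique, followed by a valency count at a red neighbour that produces a vertex of valency at least $8$), after which connectivity gives uniformity; this step must be supplied. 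Beyond that, your treatment of the hard case is a sketch whose asserted ingredients -- $K(u)=K(v)$ for $u\in K(v)$, the two blocks met by the pendant pairs being distinct, the $2$-regular connected block quotient being a cycle, and the consistency of the $\ZZ_n\times\ZZ_2\times\ZZ_2$ labelling -- are all correct and are established in the paper by the red-clique machinery; with the coordinates defined as in the paper (second coordinate records which backward-pointing red edge of the block contains the vertex, third records which such edge of the next block it is joined to), the ``global consistency'' you flag as the principal obstacle is automatic. Finally, $n=3$ is not excluded by your ``three pairwise distinct blocks'' remark, since the blocks $i-1$, $i$, $i+1$ are already distinct for $n=3$; one must check directly that $Q_3$ is not CIS.
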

\begin{proof}
By Theorem~\ref{Omega=3} and Lemma~\ref{lemma:new}, we may assume that $(k,\omega(\Gamma))$ is one of the following pairs : $(5,4)$, $(6,4)$, $(7,4)$, or $(7,5)$ (otherwise $\Gamma$ is either complete or isomorphic to $L(K_{3,3})$). Let $t=\omega(\Gamma)-1$ and let $Y$ be a local graph of $\Gamma$. By Corollary~\ref{corollary:localCIS}, $Y$ is a co-well-covered graph of order $k$ with $\omega(Y)=t$ and with the property that no two $t$-cliques intersect in a $(t-1)$-clique, and we may apply Lemma~\ref{lemma:local}.

If $k=5$ then every local graph is isomorphic to $T_2$. By Lemma~\ref{UniversalVertex}, $\Gamma=Z[K_2]$ for some $2$-regular graph $Z$. Since $\Gamma$ is not complete, $Z\cong C_n$ for some $n\geq 4$. By Proposition~\ref{lemma:CIS-complements}~(\ref{CIS:lexproduct}), $Z$ must be CIS hence $n=4$ and $\Gamma\cong C_4[K_2]$. If $k=6$ then every local graph is isomorphic to $2K_3$ and, by Theorem~\ref{Local TwiceComple}, $\Gamma\cong L(K_{4,4})$.

From now on, we assume that $k=7$. If $t=4$ then, by Lemma~\ref{lemma:local}, every local graph is isomorphic to $U_2$. Since $U_2$ contains a unique universal vertex, Lemma~\ref{UniversalVertex} implies that $\Gamma\cong Z[K_2]$ for some graph $Z$. It follows that $\omega(\Gamma) = 2\cdot\omega(Z)$ is even, contradicting the fact that $\omega(\Gamma) = t+1 = 5$.

From now on, we assume that $t=3$ and thus $\omega(\Gamma)=4$. By Lemma~\ref{lemma:local}, every local graph of $\Gamma$ is isomorphic to $T_3$ or $T_3'$. We will show that $\Gamma$ is either isomorphic to $K_{3,3}[K_2]$ or $Q_n$ for some $n\geq 4$. This case is by far the hardest; we thus break up the proof into a series of claims.

\smallskip

\noindent\textsc{Claim~1.}~Let $C$, $C_1$, and $C_2$ be three pairwise distinct $4$-cliques of $\Gamma$ such that $C_1\cap C_2=\emptyset$ and $C\subseteq C_1\cup C_2$. Then $|C\cap C_1|=|C\cap C_2|=2$ and every vertex of $C_1\setminus C$ is adjacent to every vertex of $C_2\setminus C$.

\smallskip

\noindent {\it Proof.} Recall that, by Lemma~\ref{coollemma}, distinct $4$-cliques of $\Gamma$ intersect in at most an edge and thus $|C\cap C_1|=|C\cap C_2|=2$. Let $x_1\in C_1\setminus C$ and let $x_2\in C_2\setminus C$. If $x_1$ and $x_2$ are not adjacent then they are part of a maximal stable set $S$. Since $C=(C_1\cap C)\cup(C_2\cap C)$, every vertex of $C$ is adjacent to one of $x_1$ or $x_2$. It follows that $S$ does not intersect $C$, contradicting the fact that $\Gamma$ is CIS.~$_\blacksquare$

\smallskip

We call an edge of $\Gamma$ \emph{red} if it is contained in at least two different $4$-cliques. If $\{u,v\}$ is a red edge, we will say that $u$ is a \emph{red} neighbor of $v$.

\smallskip

\noindent\textsc{Claim~2.}~Let $C$ be a $4$-clique containing two disjoint red edges $e_1$ and $e_2$. For every $i\in\{1,2\}$, let $C_i$ be a $4$-clique distinct from $C$ containing $e_i$. Then $C_1\cap C_2 = \emptyset$ and every vertex of $C_1\setminus C$ is adjacent to every vertex of $C_2\setminus C$. In particular, $C_i\setminus C$ is a red edge.

\medskip

\noindent {\it Proof.} Since distinct $4$-cliques intersect in at most an edge, we have $C_i\cap C=e_i$ and thus $|C_i\cap C|=2$ and $C=e_1\cup e_2\subseteq C_1\cup C_2$. Let $e_1=\{v,w\}$. In the local graph at $v$, $C$ and $C_1$ project to two triangles intersecting at the vertex $w$. In particular, there are no edges between $C_1\setminus C$ and $C\setminus C_1=C_2\cap C$. This implies that $C_1\cap C_2=\emptyset$. By Claim~1, every vertex of $C_1\setminus C$ is adjacent to every vertex of $C_2\setminus C$. It follows that $(C_1\setminus C)\cup (C_2\setminus C)$ is a $4$-clique and thus $C_i\setminus C$ is a red edge, being contained both in $C_i$ and in $(C_1\setminus C)\cup (C_2\setminus C)$.~$_\blacksquare$

\smallskip

\noindent\textsc{Claim~3.}~If  $\{u,v\}$ is an edge then the local graph at $u$ is isomorphic to the local graph at $v$.

\smallskip

\noindent {\it Proof.} Suppose otherwise, that is, without loss of generality, that the local graph at $u$ is isomorphic to $T_3$ and the local graph at $v$ is isomorphic to $T_3'$. Since $T_3'$ has a universal vertex, $v$ has a unique neighbor $v'$ such that $N[v]=N[v']$. Clearly, the local graph at $v'$ is also isomorphic to $T_3'$ and thus $v'\neq u$. Note that $v'$ is the unique red neighbor of $v$ and thus $\{u,v\}$ is not a red edge. By the same reasoning, $\{u,v'\}$ is not red either. Let $C$ be the unique $4$-clique containing $\{u,v\}$. Note that $v'\in C$ and write $C=\{u,v,v',w\}$. Since neither $v$ nor $v'$ are red neighbors of $u$ and the local graph at $u$ is isomorphic to $T_3$, it follows that $\{u,w\}$ is a red edge. Let $e_1=\{v,v'\}$ and $e_2=\{u,w\}$. There exist $C_1$ and $C_1'$, two $4$-cliques distinct from $C$ such that $C_1\cap C_1'=e_1$, and $C_2$ a $4$-clique distinct from $C$ and containing $e_2$. By Claim~2, every vertex of $C_2\setminus C$ is adjacent to every vertex of $(C_1\cup C_1')\setminus e_1$.

Let $z$ be the unique red neighbour of $u$ different from $w$. Note that $z\in C_2\setminus C$ and thus the four vertices of $(C_1\cup C_1')\setminus e_1$ are adjacent to $z$. On the other hand, by examining $T_3$, we see that $|N(z)\cap N(u)|=4$. Since the local graph at $v$ is isomorphic to $T_3'$, $u$ is not adjacent to any vertex in $(C_1\cup C_1')\setminus e_1$ and thus $N(u)$ is disjoint from $(C_1\cup C_1')\setminus e_1$. It follows that $|N(z)|\geq 8$, which is a contradiction.~$_\blacksquare$

\smallskip

Since $\Gamma$ is connected, Claim~3 implies that all local graphs of $\Gamma$ are pairwise isomorphic. The case when they are all isomorphic to $T_3'$ is easy to deal with. Indeed, this implies by Lemma~\ref{UniversalVertex} that $\Gamma=Z[K_2]$ for some connected $3$-regular graph $Z$ and it is easy to check that we must have $\omega(Z)=2$. By Proposition~\ref{lemma:CIS-complements}~(\ref{CIS:lexproduct}), $Z$ must be a CIS graph. By Corollary~\ref{YOYO}, $Z\cong K_{3,3}$ and consequently $\Gamma\cong K_{3,3}[K_2]$.

From now on, we assume that all local graphs of $\Gamma$ are isomorphic to $T_3$. In particular, every red edge is part of exactly two $4$-cliques, and every vertex has precisely two red neighbors (the vertices of valency $4$ in its local graph) and thus is incident to exactly two red edges. Let $v$ be a vertex. Starting from the local graph at $v$ and removing the two red neighbors of $v$, we obtain a graph with five vertices and two edges. We call these two edges \emph{extremal (with respect to $v$)}.

\smallskip

\noindent\textsc{Claim~4.}~Every extremal edge is red.

\smallskip

\noindent {\it Proof.} Note that each red edge appears in exactly four local graphs. On the other hand, in the local graph at a vertex, red neighbors are incident to at most one red edge and thus the local graph contains at most four red edges. Since the number of red edges is equal to the number of vertices, every local graph contains exactly four red edges, two of which must include the extremal edges.~$_\blacksquare$

\smallskip

\noindent\textsc{Claim~5.}~Every red $2$-path is contained in a unique red cycle, which has length $4$. Moreover, the induced graph on this $4$-cycle is a clique.

\smallskip

\noindent {\it Proof.} Let $P=(u,v,w)$ be a red $2$-path. Clearly, $u$ and $w$ are the only two red neighbors of $v$. By considering the local graph at $v$, it follows that $u$ and $w$ are adjacent and there is a unique vertex, say $x$, adjacent to each of $u$, $v$ and $w$. Now, $\{u,v,w\}$ is a triangle in the local graph at $x$ therefore it contains a red neighbor of $x$. Since $x\notin V(P)$ and $u$ and $w$ are the only two red neighbors of $v$, we may assume without loss of generality that $w$ is a red neighbor of $x$.

Let $e_1=\{v,w\}$ and let $C_1=\{u,v,w,x\}$. Note that $C_1$ is a $4$-clique. Let $C$ be the unique $4$-clique distinct from $C_1$ and containing the edge $e_1$. Note that $C\setminus C_1$ is an extremal edge with respect to $v$ hence it must be red by Claim~4.  By Claim~2, it follows that $C_1\setminus C=\{u,x\}$ must be red, concluding the proof of this claim.~$_\blacksquare$

\smallskip

We call a $4$-clique \emph{red} if it contains a red cycle, and \emph{black} otherwise.

\smallskip

\noindent\textsc{Claim~6.}~Every black clique contains exactly two red edges and these are disjoint.

\smallskip

\noindent {\it Proof.}
Let $C$ be a black clique. By Claim~5, $C$ does not contain a red $2$-path and thus it suffices to show that $C$ contains at least two red edges. Let $v\in C$. Suppose first that $C$ contains no edge that is extremal with respect to $v$. By examining $T_3$, we see that this implies that $C$ contains two red neighbours of $v$ and thus a red $2$-path, which is a contradiction. We may thus assume that $C$ contains an extremal edge with respect to $v$, say $e$. By examining $T_3$, we see that $C=e\cup\{v,w\}$ for some red neighbor $w$ of $v$. By Claim~4, $e$ is red and thus $C$ contains at least two red edges.~$_\blacksquare$

\smallskip

Let $C$ be a red clique and let $e_1$, $e_2$, $e_3$, and $e_4$ be the four red edges of $C$, labeled such that $e_1$ and $e_2$ are disjoint (and thus also $e_3$ and $e_4$). For every $i\in\{1,2,3,4\}$, there is a unique $4$-clique containing $e_i$ and distinct from  $C$, say $C_i$. Clearly, $C_i$ is black and thus, by Claim~$6$, it contains a unique red edge distinct from $e_i$, say $f_i$. Note that $e_i\cap f_i=\emptyset$. Now, $f_i$ is red and thus it is contained in a red clique, say $R_i$. Since every red edge is contained in exactly two $4$ cliques, it follows that $R_i$ is the unique clique distinct from $C_i$ and containing $f_i$.  By Claim~2, every vertex of $f_1$ is adjacent to every vertex of $f_2$. In particular, $f_1\cup f_2$ is a $4$-clique and thus $R_1=R_2$. By the same reasoning, we have $R_3=R_4$. Note that $f_1$ and $f_3$ are extremal edges of the local graph at the vertex $e_1\cap e_3$. In particular, $f_1$ and $f_3$ are disjoint and there are no edges between them. It follows that $R_3\neq R_1$.

Note that $R_i$ was uniquely determined by $C$ and $e_i$. We call $R_i$ the red clique \emph{adjacent to $C$ at $e_i$}. Note that, in fact, $C$ is adjacent to $R_i$ at $f_i$, hence the adjacency relation is symmetric. Moreover, we have shown in the paragraph above that every red clique is adjacent to exactly two red cliques.

In particular, there is a set of $n$ red cliques indexed by $\ZZ_n$ with $n\geq 3$ such that $C_i$ is adjacent to $C_{i+1}$. It is not hard to check that $\Gamma$ being connected implies that $\bigcup_{i\in\ZZ_n} C_i=V(\Gamma)$.

Now, for every $i\in \ZZ_n$, there are exactly two edges, say $e_{i,0}$ and $e_{i,1}$ of $C_i$ such that $C_{i-1}$ is adjacent to $C_i$ at $e_{i,0}$ and also at $e_{i,1}$. Note that $C_i=e_{i,0}\cup e_{i,1}$.

We define a mapping $\varphi:\bigcup_{i\in\ZZ_n} C_i\to \ZZ_n\times\ZZ_2\times\ZZ_2$ by $\varphi(v)=(i,x,y)$ such that $v$ is contained in $e_{i,x}$ and adjacent to $e_{i+1,y}$. Now we will show that $\varphi$ is a well-defined bijection. Indeed, if $v\in \bigcup_{i\in\ZZ_n} C_i$ then there is a unique $i\in \ZZ_n$ such that $v\in C_i$. Since $C_i$ is the disjoint union of $e_{i,0}$ and $e_{i,1}$, there is a unique $x\in \ZZ_2$ such that $v\in e_{i,x}$. Let $f$ be the unique red edge containing $v$ distinct from $e_{i,x}$. Note that the red clique adjacent to $C_i$ at $e_{i,x}$ is $C_{i-1}$ and thus the red clique adjacent to $C_i$ at $f$ is $C_{i+1}$. It follows that there is a unique $y\in \ZZ_2$ such that $v$ is adjacent to $e_{i+1,y}$.

We now show that $\varphi$ is an isomorphism  between $\Gamma$ and $Q_n$. Let $\{v,w\}\in E(\Gamma)$ and let $\varphi(v)=(i,x,y)$. Note that $N[v] \subseteq C_{i-1}\cup C_i \cup C_{i+1}$. If $w\in C_i$ then $\varphi(v)$ and $\varphi(w)$ have the same first coordinate, hence $\varphi(\{v,w\})\in E(Q_n)$. Suppose now that $w\not\in C_{i}$. Without loss of generality, we may assume that $w\in C_{i+1}$. Note that $v\in e_{i,x}$ and $w\in e_{i+1,y}$ and thus $\varphi(w)=(i+1,y,z)$ for some $z\in \ZZ_2$. This shows that $\varphi(\{v,w\})\in E(Q_n)$. As $\{v,w\}$ was an arbitrary edge of $\Gamma$, it follows that $\varphi(E(\Gamma))=E(Q_n)$. Since $|E(\Gamma)|=|E(Q_n)|$, this implies that $\varphi$ is an isomorphism. It is not hard to check that $Q_3$ is not CIS and thus $n\geq 4$.
\end{proof}

\begin{proposition}\label{irreducibleQuotientVT}
Let $\Gamma$ be a graph and let $X$ be its irreducible quotient. Then, $\Gamma$ is vertex-transitive if and only if $X$ is vertex-transitive and $\Gamma \cong X[\overline{K_n}]$ for some $n\ge 1$.
\end{proposition}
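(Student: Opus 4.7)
The plan is to prove the two implications separately, with the forward direction being the more substantial one. The key observation for both is that the equivalence relation $R$ is invariant under $\mathrm{Aut}(\Gamma)$: if $g\in\mathrm{Aut}(\Gamma)$ and $N(u)=N(v)$, then $N(g(u))=g(N(u))=g(N(v))=N(g(v))$, so $g(u)\,R\,g(v)$. Consequently, the $R$-classes form a system of blocks for $\mathrm{Aut}(\Gamma)$, and $\mathrm{Aut}(\Gamma)$ admits a natural homomorphism to $\mathrm{Aut}(X)$ via the quotient map $\pi\colon V(\Gamma)\to V(X)$.

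For the forward direction, assume $\Gamma$ is vertex-transitive. Since $\mathrm{Aut}(\Gamma)$ acts transitively on $V(\Gamma)$ and permutes the $R$-classes as blocks, its induced action on $V(X)$ is transitive; thus $X$ is vertex-transitive. Moreover, transitivity of $\mathrm{Aut}(\Gamma)$ on vertices forces all $R$-classes to have the same size, say $n$. Fixing, for each $R$-class, a bijection with $\{1,\ldots,n\}$ identifies $V(\Gamma)$ with $V(X)\times\{1,\ldots,n\}$. As noted before the definition of the irreducible quotient, each $R$-class is a stable set, so $(u,i)$ and $(u,j)$ are non-adjacent for $i\neq j$. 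For $u\neq v$, the definition of $R$ implies that whether $(u,i)$ and $(v,j)$ are adjacent in $\Gamma$ depends only on the classes of $u$ and $v$, and by definition of $X$ this happens precisely when $\{u,v\}\in E(X)$. This is exactly the description of $X[\overline{K_n}]$, yielding $\Gamma\cong X[\overline{K_n}]$.

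For the backward direction, assume $X$ is vertex-transitive and $\Gamma\cong X[\overline{K_n}]$. Given two vertices $(u,i),(v,j)\in V(X)\times\{1,\ldots,n\}$, pick $\sigma\in\mathrm{Aut}(X)$ with $\sigma(u)=v$ and any permutation $\tau$ of $\{1,\ldots,n\}$ sending $i$ to $j$. The map $\varphi\colon(w,k)\mapsto(\sigma(w),\tau(k))$ is a bijection of $V(\Gamma)$; since adjacency in $X[\overline{K_n}]$ between $(w,k)$ and $(w',k')$ depends only on $w$ and $w'$ (and $\sigma$ is an automorphism of $X$), $\varphi$ preserves edges and non-edges, so $\varphi\in\mathrm{Aut}(\Gamma)$ and maps $(u,i)$ to $(v,j)$.

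No step presents a serious obstacle here; the only point that requires a brief verification is the claim in the forward direction that adjacencies between distinct $R$-classes are ``all-or-nothing,'' which is immediate from the definition of $R$ (vertices in the same class have identical open neighborhoods, so in particular identical adjacencies with any vertex outside the class).
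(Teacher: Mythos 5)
Your proof is correct and follows essentially the same route as the paper's: both use the fact that $R$ is $\mathrm{Aut}(\Gamma)$-invariant, so its classes are blocks of equal size whose all-or-nothing adjacency pattern yields $\Gamma\cong X[\overline{K_n}]$ and a transitive induced action on $X$. You simply spell out the details (including the converse, which the paper dismisses as clear) more explicitly.
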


\begin{proof}
 Let $R$ be the equivalence relation ``having the same neighborhood'' on $V(\Gamma)$. If $\Gamma$ is vertex-transitive then the $R$-equivalence classes have the same size, say $n$. Since these equivalence classes are stable sets, it follows that $\Gamma \cong X[\overline{K_n}]$. Moreover, the $R$-equivalence classes form a partition of $V(\Gamma)$ invariant under the action of the automorphism group of $\Gamma$ and thus the automorphism group of $\Gamma$ has an induced action on the quotient, namely $X$. Since $\Gamma$ is vertex-transitive, it follows that $X$ is vertex-transitive. The converse is clear.
\end{proof}

Combining Propositions~\ref{irreducibleQuotient} and~\ref{irreducibleQuotientVT} and Theorems~\ref{thm:VT-CIS} and~\ref{thm:valency-7} we easily obtain the following.

\begin{corollary}\label{cor:VT-CIS-valency-7}
Let $\Gamma$ be a connected vertex-transitive graph of valency at most $7$. Then, $\Gamma$ is CIS if and only if $\Gamma$ is isomorphic to one of the following graphs $K_n$ ($1\leq n \leq 8$), $K_{n,n}$ ($2\leq n\leq 7$), $L(K_{3,3})$, $L(K_{4,4})$, $C_4[K_2]$, $K_3[\overline{K_2}]$, $K_3[\overline{K_3}]$, $K_4[\overline{K_2}]$, $K_{3,3}[K_2]$ or $Q_n$ for some $n\geq 4$.
\end{corollary}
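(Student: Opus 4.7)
The plan is to combine the four cited results in a direct reduction. Given a connected vertex-transitive graph $\Gamma$ of valency $k \leq 7$, let $X$ be its irreducible quotient. By Proposition~\ref{irreducibleQuotientVT}, $X$ is vertex-transitive and $\Gamma \cong X[\overline{K_n}]$ for some integer $n \geq 1$, and by Proposition~\ref{irreducibleQuotient}, $\Gamma$ is CIS if and only if $X$ is CIS. Assuming $X$ is CIS, Corollary~\ref{cor:regular-wc-co-wc} yields that $X$ is regular, well-covered, and co-well-covered, so Theorem~\ref{thm:valency-7} applies and provides an explicit list of candidates for $X$.

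Next I would enumerate the pairs $(X,n)$ subject to the valency constraint. If $X$ has valency $d$ then $\Gamma$ has valency $dn$, so the condition $dn \leq 7$ restricts the possibilities. When $n = 1$ we have $\Gamma \cong X$, and I recover directly the graphs supplied by Theorem~\ref{thm:valency-7}: the complete graphs $K_m$ with $1 \leq m \leq 8$, together with $L(K_{3,3})$, $L(K_{4,4})$, $C_4[K_2]$, $K_{3,3}[K_2]$, and $Q_n$ for $n \geq 4$. When $n \geq 2$, the non-complete graphs in the list of Theorem~\ref{thm:valency-7} are all excluded because each has valency at least $4$, so $X$ must be complete, say $X = K_m$ with $m \geq 2$ (the case $m=1$ would force $\Gamma \cong \overline{K_n}$, which is disconnected). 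Enumerating pairs $(m,n)$ with $m \geq 2$, $n \geq 2$, and $(m-1)n \leq 7$ produces exactly the graphs $K_{n,n}$ for $2 \leq n \leq 7$ (from $m = 2$), $K_3[\overline{K_2}]$ and $K_3[\overline{K_3}]$ (from $m = 3$), and $K_4[\overline{K_2}]$ (from $m = 4$), matching the remaining entries of the stated list.

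For the converse, each graph on the list is manifestly connected, vertex-transitive, and of valency at most $7$; the CIS property follows from Proposition~\ref{prop:LKnn} for the line graphs, Proposition~\ref{prop:Gamma_r} for $Q_n$, a direct check for complete and complete bipartite graphs, and Proposition~\ref{lemma:CIS-complements}(\ref{CIS:lexproduct}) applied to each of the lexicographic products $C_4[K_2]$, $K_3[\overline{K_2}]$, $K_3[\overline{K_3}]$, $K_4[\overline{K_2}]$, $K_{3,3}[K_2]$, noting that $\overline{K_n}$ is CIS by Proposition~\ref{lemma:CIS-complements}(\ref{CIS:disconnected}) and that each of $K_2$, $K_3$, $K_4$, $C_4$, $K_{3,3}$ is CIS. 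The only genuine difficulty is the bookkeeping in the enumeration step: verifying that the disconnected case $X = K_1$ with $n \geq 2$ is discarded, that no pair $(m,n)$ is double-counted with the $n = 1$ list, and that every entry in the stated list is accounted for exactly once.
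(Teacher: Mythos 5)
Your proposal is correct and follows exactly the route the paper intends: the paper's proof consists of the single remark that the corollary is obtained by combining Propositions~\ref{irreducibleQuotient} and~\ref{irreducibleQuotientVT} with Theorems~\ref{thm:VT-CIS} and~\ref{thm:valency-7}, which is precisely the reduction to the irreducible quotient and the enumeration of pairs $(X,n)$ under the valency constraint that you carry out (including the correct exclusion of the disconnected case $X\cong K_1$, $n\ge 2$).
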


\section{Some open questions}\label{sec:questions}

We conclude the paper with some open questions related to the results of this paper. It is known that every perfect graph $\Gamma$ satisfies the inequality $\alpha(\Gamma)\omega(\Gamma)\ge |V(\Gamma)|$~\cite{MR0309780}, and Theorem~\ref{thm:VT-CIS} implies that it holds with equality for vertex-transitive CIS graphs. This motivates the following question.

\begin{question}
Does every CIS graph $\Gamma$ satisfy $\alpha(\Gamma)\omega(\Gamma)\ge |V(\Gamma)|$?
\end{question}

Given the examples that have appeared in this paper, the following is also a natural question.

\begin{question}\label{quest-chi}
Does every vertex-transitive CIS graph $\Gamma$ admit a decomposition of its vertex set into $\omega(\Gamma)$ stable sets?
\end{question}

Question~\ref{quest-chi} can be stated equivalently as follows: {\it Does the chromatic number of every vertex-transitive CIS graph equal its clique number?} Generalizing the famous class of perfect graphs~\cite{MR2233847,MR2063679}, graphs with this property were named {\it weakly perfect graphs} in~\cite{MR2679902,MR2738964}. Finally, in view of Theorem~\ref{thm:valency-7}, the following question seems natural.

\begin{question}
{Let $G$ be a regular CIS graph such that both it and its complement are connected, well-covered and irreducible. Is $G$ necessarily  vertex-transitive?}
\end{question}

We suspect the answer is no, but it does not seem easy to produce a counterexample.

\bibliographystyle{abbrv}

\begin{sloppypar}
\bibliography{CIS-VT-bib}{}

\end{sloppypar}

\end{document}